\newtheorem{theorem}{Theorem}[section]
\newtheorem{lemma}[theorem]{Lemma}
\newtheorem{corollary}[theorem]{Corollary}
\newtheorem{proposition}[theorem]{Proposition}
\newtheorem{definition}[theorem]{Definition}
\newtheorem{remark}[theorem]{Remark}
\newcommand\on[1]{\operatorname{#1}}
\newcommand\mc[1]{\mathcal{#1}}
\newcommand\ps[1]{\underline{#1}}
\newcommand\ld{\lambda}
\newcommand{\sa}{\on{sa}}
\newcommand{\op}{\on{op}}              
\newcommand{\hA}{\hat{A}}
\newcommand\hB{\hat{B}}
\newcommand{\hU}{\hat{U}}
\newcommand{\hP}{\hat{P}}
\newcommand\hQ{\hat{Q}}
\newcommand\PV{\mc{P}(V)}
\newcommand\eq[1]{(\ref{#1})}
\newcommand\Ain[1]{A\,\varepsilon\,#1}
\newcommand{\Sig}{\ps{\Sigma}}            
\newcommand\Subcl[1]{{\rm Sub}_{{\rm cl}}#1} 
\newcommand\Set{\mathbf{Set}}                    
\newcommand\cN{\mc{N}}
\newcommand\cM{\mc{M}}
\newcommand\VN{\mc{V}(\cN)}
\newcommand\SetC[1]{\Set^{#1^{\op}}}
\newcommand\SetVNop{\SetC{\VN}}		
\newcommand\SetCAop{\SetC{\CA}}
\newcommand\CA{\mc{C}(\cA)}
\newcommand\bbC{\mathbb{C}}
\newcommand\bbR{\mathbb{R}}
\newcommand\PN{\mathcal{P}(\cN)}
\newcommand\cA{\mc{A}}
\newcommand\cB{\mc{B}}
\newcommand\Ob[1]{\on{Ob}(#1)}
\newcommand\ra{\rightarrow}
\newcommand\mt{\mapsto}
\newcommand\lra{\longrightarrow}
\newcommand\lmt{\longmapsto}
\newcommand\bmeet{\bigwedge}
\newcommand\tphi{\tilde\phi}
\newcommand\SigA{\Sig^\cA}
\newcommand\SigB{\Sig^\cB}
\newcommand\SigN{\Sig^{\cN}}
\newcommand\cG{\mc G}
\newcommand\UA{\mc{U}(\cA)}
\newcommand\phiU{\phi_{\hU}}
\newcommand\PhiU{\Phi_{\hU}}
\newcommand\Id{\on{Id}}
\newcommand\Aut{\on{Aut}}
\newcommand\ol[1]{\overline{#1}}
\newcommand\ga{\gamma}
\newcommand\Ga{\Gamma}
\newcommand\pair[2]{\langle #1,#2\rangle}
\newcommand\uC{\mathbf{uC^*}}
\newcommand\tT{\tilde T}
\newcommand\deo{\delta^o}
\newcommand\Cl{\mc{C}l}
\newcommand\CP{\ps{CP}}
\newcommand\join{\vee}
\newcommand\meet{\wedge}
\newcommand\Rlr{\ps{\bbR^{\leftrightarrow}}}
\newcommand\UN{\mc U(\cN)}
\newcommand\hH{\hat H}
\newcommand\trho{\tilde\rho}
\newcommand\tr{\on{tr}}
\newcommand\io{\iota}
\newcommand\pio{\ps{\io}}
\newcommand\pphi{\ps{\phi}}
\newcommand\ptio{\ps{\tilde\io}}
\newcommand\ptphi{\ps{\tilde\phi}}
\newcommand\de{\delta}
\begin{document}

\title[Flows on Generalised Gelfand Spectra]{Flows on Generalised Gelfand Spectra of Nonabelian Unital $C^*$-Algebras and Time Evolution of Quantum Systems}

\author{Andreas D\"oring}

\address{Andreas D\"oring, Clarendon Laboratory, Department of Physics, University of Oxford, Parks Road, OX1 3PU, Oxford, UK}
\email{doering@atm.ox.ac.uk}
\date{4. December 2013}

\begin{abstract}
In \cite{Doe12b}, we associated a presheaf $\SigA$ with each unital $C^*$-algebra $\cA$. The spectral presheaf $\SigA$ generalises the Gelfand spectrum of an abelian unital $C^*$-algebra. In the present article, we consider one-parameter groups of automorphisms of the spectral presheaf, in particular those arising from one-parameter groups of inner automorphisms of the algebra. We interpret the spectral presheaf as a (generalised) state space for a quantum system and show how we can use flows on the spectral presheaf and on associated structures to describe the time evolution of non-relativistic quantum systems, both in the Schr\"odinger picture and the Heisenberg picture. 
\end{abstract}

\maketitle




\section{Introduction}
It was shown in \cite{Doe12b} how to associate a presheaf $\SigA$, called the spectral presheaf, with each unital $C^*$-algebra $\cA$. This object was first defined in the topos approach to quantum theory \cite{IshBut98,IHB00,DoeIsh08a,DoeIsh08b,DoeIsh08c,DoeIsh08d,Doe09a,Doe09b,DoeIsh11,Doe11a,Doe11b,DoeIsh12,DoeBar12,Doe12}, see also \cite{HLS09a,HLS09b,HLS11,Wol10}, and is interpreted physically as a generalised state space of a quantum system. Mathematically, the spectral presheaf can be regarded as a generalised Gelfand spectrum of a nonabelian $C^*$-algebra.

It was shown in \cite{Doe12b} that every unital $*$-homomorphism between unital $C^*$-algebras gives rise to a morphism between their presheaves in the opposite direction, and how the construction of spectral presheaves and their morphisms can be understood categorically as being based on `local duality'. By considering automorphisms of the spectral presheaf, we determined how much algebraic information about a nonabelian $C^*$-algebra is contained in its spectral presheaf. For a von Neumann algebra $\cN$ not isomorphic to $\bbC^2$ and with no type $I_2$ summand, we showed that the spectral presheaf $\SigN$ determines exactly the Jordan $*$-structure of $\cN$, and using recent results by Hamhalter, we showed that for a unital $C^*$-algebra $\cA$, the spectral presheaf determines $\cA$ up to quasi-Jordan isomorphisms and, for a large class of $C^*$-algebras, also up to Jordan isomorphims. 

In the present article, we will develop some further structures relating to the spectral presheaf, viz. flows corresponding to one-parameter groups of automorphisms, and their applications in physics. On the physical side, the setting is non-relativistic algebraic quantum theory in which the physical quantities (observables) of a single quantum system are described by the self-adjoint operators in a von Neumann algebra. This is what we mean when referring to `standard quantum theory' in the main text. Extensions to composite systems, special relativistic space-times, etc. will be treated in future work.

In order to make this article reasonably self-contained, we will state those results of \cite{Doe12b} that we will need in the following in section \ref{Sec_SomeResultsOfPartI}. In section \ref{Sec_UnitariesAndFlows}, we briefly consider inner automorphisms of a unital $C^*$-algebra, one-parameter groups of unitaries and associated flows on the spectral presheaf. In section \ref{Sec_TimeEvolution}, which forms the bulk of the paper, we discuss the time evolution of quantum systems in terms of flows on structures associated with the spectral presheaf of (the von Neumann algebra of physical quantities of) a quantum system. Prop. \ref{Prop_IsomS(N)AndGaCP} may be of independent interest, since it is a reformulation of the generalised version of Gleason's theorem, valid for von Neumann algebras with no type $I_2$ summand, in terms of global sections of a certain presheaf. This can be compared to a similar result by de Groote \cite{deG07} and the reformulation of the Kochen-Specker theorem in terms of (the non-existence of) global sections of the spectral presheaf \cite{IshBut98,IHB00,Doe05}, which was a central insight in the early stages of the topos approach to quantum theory.

In section \ref{Sec_Compatibility}, we show that our reformulations of the the Heisenberg and the Schr\"odinger picture in terms of flows on the spectral presheaf are compatible, in analogy to standard quantum theory.

\section{Some previous results}			\label{Sec_SomeResultsOfPartI}
We summarise notation and sketch some results from \cite{Doe12b} that we will need in the rest of this paper. For details, proofs and a fuller development, see there. Standard references on operator algebras are e.g. \cite{KadRin83/86,Bla06}, and \cite{McLMoe92,Joh02/03} on topos theory.


Let $\cA$ be a unital $C^*$-algebra. The set $\CA$ of unital abelian $C^*$-subalgebras of $\cA$ that share the unit element with $\cA$, partially ordered under inclusion, is called the \emph{context category of $\cA$}. Elements $C,D$ of $\CA$ are called \emph{contexts of $\cA$}. The category $\SetCAop$ of presheaves over $\CA$, that is, contravariant functors from $\CA$ to $\Set$, with natural transformations as arrows, is a topos. We call it the \emph{(presheaf) topos associated with $\cA$}. The \emph{spectral presheaf $\SigA$} is an object in the topos $\SetCAop$, given
\begin{itemize}
	\item [(a)] on objects: for all contexts $C\in\CA$, $\SigA_C$ is the Gelfand spectrum of $C$, the set of algebra morphisms (characters) $\ld:C\ra\bbC$, equipped with the relative weak$^*$-topology,
	\item [(b)] on arrows: for all inclusions $i_{C'C}:C'\hookrightarrow C$ in $\CA$,
	\begin{align}
				\SigA(i_{C'C}): \SigA_C &\lra \SigA_{C'}\\			\nonumber
				\ld &\lmt \ld|_{C'}.
	\end{align}
\end{itemize}

\begin{definition}			\label{Def_AutomOfSigA}
(Def. 5.16 in \cite{Doe12b}) Let $\cA$ be a unital $C^*$-algebra, and let $\SigA$ be its spectral presheaf. An \emph{automorphism of $\SigA$} is a pair $\pair{\Ga}{\io}$, where $\Ga:\SetCAop\ra\SetCAop$ is an essential geometric automorphism, induced by an order-automorphism $\ga:\CA\ra\CA$ (called the \emph{base map}). $\Ga^*:\SetCAop\ra\SetCAop$ is the inverse image functor of the geometric automorphism $\Ga$, and $\io:\Ga^*(\SigA)\ra\SigA$ is a natural isomorphism for which each component $\io_C:(\Ga^*(\SigA))_C\ra\SigA_C$, $C\in\CA$, is a homeomorphism. Hence, an automorphism $\pair{\Ga}{\io}$ acts by
\begin{equation}
			\SigA \stackrel{\Ga^*}{\lra} \Ga^*(\SigA) \stackrel{\io}{\lra} \SigA.
\end{equation}
We will also use the notation $\io\circ\Ga^*$ for an automorphism $\pair{\Ga}{\io}$. The automorphisms of $\SigA$ form a group, which we denote by $\Aut(\SigA)$.
\end{definition}

The first key observation is that every unital $*$-automorphism $\phi:\CA\ra\CA$ induces an automorphism of $\SigA$: clearly, $\phi$ gives an order-automorphism (base map)
\begin{align}
			\tphi:\CA &\lra\ \CA\\			\nonumber
			C &\lmt \phi(C),
\end{align}
which in turn induces an essential geometric morphism $\Phi:\SetCAop\ra\SetCAop$ with inverse image part
\begin{align}
			\Phi^*:\SetCAop &\lra \SetCAop\\			\nonumber
			\ps P &\lmt \ps P\circ\tphi.
\end{align}
In particular, $\Phi^*(\SigA)$ is given, for each $C\in\CA$, by
\begin{equation}
			(\Phi^*(\SigA))_C=\SigA_{\tphi(C)}=\Sigma(\phi(C)),
\end{equation}
i.e., the component of $\Phi^*(\SigA)$ at $C$ is the Gelfand spectrum of $\phi(C)$. Morevoer, for each $C\in\CA$, there is a unital $*$-isomorphism
\begin{equation}
			\phi|_C: C \lra \phi(C),
\end{equation}
and by Gelfand duality, there is a homeomorphism
\begin{align}
			\cG_{\phi;C}:\Sigma(\phi(C))=\SigA_{\tphi_C} &\lra \SigA_C=\Sigma_C\\			\nonumber
			\ld &\lmt \ld\circ\phi|_C.
\end{align}
The $\cG_{\phi;C}$, $C\in\CA$, are the components of a natural isomorphism
\begin{equation}
			\cG_\phi:\Phi^*(\SigA) \lra \SigA,
\end{equation}
so we obtain an automorphism $\pair{\Phi}{\cG_\phi}:\SigA\ra\SigA$, acting by
\begin{equation}
			\SigA \stackrel{\Phi^*}{\lra} \Phi^*(\SigA) \stackrel{\cG_\phi}{\lra} \SigA.
\end{equation}
It is easy to see that the assignment $\phi\mt\pair{\Phi}{\cG_\phi}$ is injective. We obtain:

\begin{proposition}			\label{Prop_RepOfAutcAInAutSigA}
(Prop. 5.23 in \cite{Doe12b}) Let $\cA$ be a unital $C^*$-algebra, and let $\SigA$ be its spectral presheaf. There is an injective group homomorphism
\begin{align}
			\Aut(\cA) &\lra \Aut(\SigA)^{\op}\\			\nonumber
			\phi &\lmt \pair{\Phi}{\cG_\phi}=\cG_{\phi}\circ\Phi^*
\end{align}
from the automorphism group of $\cA$ into the opposite group of automorphism group of $\SigA$. (This is the same as an injective, contravariant group homomorphism from $\Aut(\cN)$ into $\Aut(\SigN)$.)
\end{proposition}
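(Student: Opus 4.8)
The plan is to verify three things: that each $\pair{\Phi}{\cG_\phi}$ is a genuine automorphism of $\SigA$, that the assignment reverses composition, and that it is injective. The first is essentially furnished by the discussion preceding the statement, so the real work is the second. I would begin by recording the structure already assembled: for $\phi\in\Aut(\cA)$ the base map $\tphi\colon C\mapsto\phi(C)$ is an order-automorphism of $\CA$ (it is a bijection preserving unital abelian subalgebras and inclusions in both directions, with inverse $\widetilde{\phi^{-1}}$, since $\phi$ is a unit-preserving $*$-automorphism), so it induces an essential geometric automorphism $\Phi$ with $(\Phi^*(\SigA))_C=\SigA_{\tphi(C)}$; and the maps $\cG_{\phi;C}\colon\ld\mapsto\ld\circ(\phi|_C)$ are homeomorphisms assembling into a natural isomorphism $\cG_\phi\colon\Phi^*(\SigA)\ra\SigA$, the naturality coming from $(\phi|_C)\circ i_{C'C}=i_{\phi(C')\phi(C)}\circ(\phi|_{C'})$ together with the contravariance of restriction of characters. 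Hence $\pair{\Phi}{\cG_\phi}\in\Aut(\SigA)$ and $R\colon\phi\mapsto\pair{\Phi}{\cG_\phi}$ is well defined.

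The heart of the argument is the homomorphism property, for which I would first fix the group law on $\Aut(\SigA)$ in the pair description: an automorphism is given by its base map $\ga$ together with components $\io_C\colon\SigA_{\ga(C)}\ra\SigA_C$, and composing $\pair{\Ga_1}{\io^1}$ \emph{before} $\pair{\Ga_2}{\io^2}$ yields base map $\ga_1\circ\ga_2$ with components $\io^2_C\circ\io^1_{\ga_2(C)}$. This is exactly $\io^2\circ\Ga_2^*(\io^1)$, using $(\Ga_2^*\io^1)_C=\io^1_{\ga_2(C)}$ and $((\Ga_1\circ\Ga_2)^*(\SigA))_C=\SigA_{\ga_1(\ga_2(C))}$; note in particular that composing the geometric morphisms reverses the order of the base maps. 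Since $\widetilde{\phi\circ\psi}=\tphi\circ\tilde\psi$, the base map of $R(\phi\circ\psi)$ agrees with that of $R(\psi)\circ R(\phi)$, so it remains to compare components. The key algebraic identity is the factorisation $(\phi\circ\psi)|_C=(\phi|_{\psi(C)})\circ(\psi|_C)$; feeding it into Gelfand duality gives, for $\ld\in\SigA_{\phi\psi(C)}$,
\[
\cG_{\phi\circ\psi;C}(\ld)
= \ld\circ(\phi\circ\psi)|_C
= \big(\ld\circ(\phi|_{\psi(C)})\big)\circ(\psi|_C)
= \big(\cG_{\psi;C}\circ\cG_{\phi;\psi(C)}\big)(\ld),
\]
and since $\cG_{\phi;\psi(C)}=(\Psi^*\cG_\phi)_C$ this reads $\cG_{\phi\circ\psi}=\cG_\psi\circ\Psi^*(\cG_\phi)$. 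Combining the base-map and component computations yields $R(\phi\circ\psi)=R(\psi)\circ R(\phi)$ in $\Aut(\SigA)$, i.e.\ $R$ is a group homomorphism into $\Aut(\SigA)^{\op}$.

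I expect this variance bookkeeping — tracking the $\Psi^*$-twist on $\cG_\phi$ and confirming that composing the geometric morphisms reverses the order of the base maps — to be the main obstacle; once the group law on pairs is pinned down, the rest is the chain rule together with the functoriality of the Gelfand transform. A useful sanity check along the way is that $R(\id_\cA)$ has base map $\id$ and components $\ld\mapsto\ld$, hence equals the identity of $\Aut(\SigA)$, consistent with $R(\phi^{-1})=R(\phi)^{-1}$.

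Finally, for injectivity I would show that the components already recover $\phi$. If $R(\phi)=R(\psi)$ then in particular $\cG_{\phi;C}=\cG_{\psi;C}$ for every context $C$; because characters separate the points of the commutative $C^*$-algebra $\phi(C)$, the equality $\ld\circ(\phi|_C)=\ld\circ(\psi|_C)$ for all $\ld$ forces $\phi|_C=\psi|_C$ (faithfulness of Gelfand duality on $C$). Taking $C=C^*(a,1)$ for an arbitrary self-adjoint $a\in\cA$ gives $\phi(a)=\psi(a)$, and since every element of $\cA$ is a complex linear combination of self-adjoint elements while $\phi,\psi$ are linear, $\phi=\psi$. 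Hence $R$ is injective, which completes the proof.
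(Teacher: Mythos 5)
Your proposal is correct and follows essentially the same route as the paper, which constructs $\pair{\Phi}{\cG_\phi}$ from the base map $\tphi$ and the Gelfand-dual components $\cG_{\phi;C}:\ld\mapsto\ld\circ\phi|_C$ and then defers the verification to \cite{Doe12b}, asserting injectivity as ``easy to see''. You supply exactly the omitted details -- the contravariant composition law for pairs $\pair{\Ga}{\io}$, the factorisation $(\phi\circ\psi)|_C=(\phi|_{\psi(C)})\circ(\psi|_C)$, and the recovery of $\phi$ on self-adjoint generators via separation of points by characters -- all of which check out.
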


In \cite{Doe12b}, we showed that if the unital $C^*$-algebra $\cA$ is neither isomorphic to $\bbC^2$ nor to $\mc B(\bbC^2)$, then in fact an automorphism $\pair{\Ga}{\io}$ of $\SigA$ is completely determined by specifying the base map $\ga:\CA\ra\CA$ underlying the essential geometric morphism $\Ga$. (The proof uses a result by Hamhalter \cite{Ham11}.) That is, the spectral presheaf $\SigA$ is `rigid' in the sense that it has exactly as many automorphisms as the underlying poset $\CA$. In order to formulate this precisely, let $\Aut_{ord}(\CA)$ denote the group of order automorphisms of the context category. Then we have

\begin{theorem}			\label{Thm_GroupIsosCStar}
(Thm. 5.27, Cor. 5.28 in \cite{Doe12b}) Let $\cA$ be a unital $C^*$-algebra such that $\cA$ is neither isomorphic to $\bbC^2$ nor to $\mc B(\bbC^2)$. The group $\Aut_{ord}(\CA)$ is contravariantly isomorphic to the group $\Aut(\SigA)$ of automorphisms of the spectral presheaf $\SigA$ of $\cA$.
\end{theorem}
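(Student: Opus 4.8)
The plan is to realise the claimed contravariant isomorphism as the assignment sending an automorphism of $\SigA$ to its underlying base map. Define
\[
			F:\Aut(\SigA)\lra\Aut_{ord}(\CA),\qquad \pair{\Ga}{\io}\lmt\ga ,
\]
where $\ga$ is the order automorphism of $\CA$ inducing the essential geometric automorphism $\Ga$; this is well defined by Definition \ref{Def_AutomOfSigA}. First I would verify that $F$ is a group homomorphism and fix its variance. Since the inverse image functor of a composite of essential geometric morphisms is the composite of the inverse image functors in the opposite order, while base maps compose by precomposition (as in $\Phi^*(\ps P)=\ps P\circ\tphi$), the assignment $F$ reverses the order of composition, so $F$ is a contravariant homomorphism; this is consistent with, and extends to all of $\Aut_{ord}(\CA)$, the homomorphism of Proposition \ref{Prop_RepOfAutcAInAutSigA}. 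It then remains to show that $F$ is a bijection.

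The substantive step will be surjectivity. Given $\ga\in\Aut_{ord}(\CA)$, I would form the essential geometric automorphism $\Ga$ with $\Ga^*(\ps P)=\ps P\circ\ga$, so that $(\Ga^*(\SigA))_C=\Sigma(\ga(C))$. For each context $C$ with $\dim C\geq 3$, the map $\ga$ restricts to an order isomorphism of the down-set $\{D\in\CA:D\subseteq C\}=\cC(C)$ onto $\cC(\ga(C))$, and by Hamhalter's reconstruction theorem \cite{Ham11} this is induced by a \emph{unique} $*$-isomorphism $\kappa_C:C\ra\ga(C)$. For a two-dimensional context the poset $\cC(C)$ is only a two-element chain and the inducing $*$-isomorphism is not unique; there I would instead set $\kappa_C:=\kappa_D|_C$ for any three-dimensional $D\supseteq C$, which exists by the hypothesis on $\cA$ and is independent of $D$ by the uniqueness just invoked (the one-dimensional bottom context carries the identity). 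Gelfand duality then turns each $\kappa_C$ into a homeomorphism
\[
			\io_C:(\Ga^*(\SigA))_C=\Sigma(\ga(C))\lra\Sigma(C)=\SigA_C,\qquad \ld\lmt\ld\circ\kappa_C .
\]
Naturality of $(\io_C)_{C\in\CA}$ reduces to the identity $\kappa_C|_{C'}=\kappa_{C'}$ for each inclusion $C'\subseteq C$, which again follows from uniqueness, since both sides are $*$-isomorphisms $C'\ra\ga(C')$ inducing $\ga|_{\cC(C')}$. Hence $\io=(\io_C)_{C\in\CA}$ is a natural isomorphism $\Ga^*(\SigA)\ra\SigA$ with homeomorphic components, and $\pair{\Ga}{\io}$ is an automorphism with $F(\pair{\Ga}{\io})=\ga$.

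Injectivity then follows from the same uniqueness. If $F(\pair{\Ga_1}{\io_1})=F(\pair{\Ga_2}{\io_2})$, i.e. $\ga_1=\ga_2=\ga$, then $\Ga_1=\Ga_2$, and by Gelfand duality each component $\io_{j,C}$ has the form $\ld\mapsto\ld\circ\kappa$ for a $*$-isomorphism $\kappa:C\ra\ga(C)$ inducing $\ga|_{\cC(C)}$. For $\dim C\geq 3$ Hamhalter's uniqueness forces $\io_{1,C}=\io_{2,C}$; for a two-dimensional $C$ I would embed it in a three-dimensional $D$ (again using the hypothesis) and use naturality of $\io_1,\io_2$ to express $\io_{j,C}$ through $\io_{j,D}$, whence equality. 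Thus $\io_1=\io_2$, so $F$ is bijective, and together with the homomorphism property $F$ is a contravariant group isomorphism.

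The hard part is precisely this reconstruction, and the two-dimensional contexts are where it is delicate: the inducing $*$-isomorphism is non-unique there, and it can be pinned down only by enlarging the context to a three-dimensional one. An algebra possessing a \emph{maximal} two-dimensional context admits no such enlargement, and these algebras are exactly $\bbC^2$ and $\mc B(\bbC^2)$ (for $\mc B(\bbC^2)$ every maximal abelian subalgebra is two-dimensional). For them the components $\io_C$ at the maximal two-dimensional contexts are unconstrained, so distinct natural isomorphisms $\io$ share the same base map and $F$ fails to be injective — which is exactly why these two cases must be excluded.
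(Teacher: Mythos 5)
Your overall architecture---send an automorphism $\pair{\Ga}{\io}$ to its base map $\ga$, check contravariance, and prove bijectivity by reconstructing the natural isomorphism $\io$ from $\ga$ via Hamhalter's theorem and Gelfand duality---is exactly the strategy the paper indicates; note that the paper does not reprove this theorem but imports it from \cite{Doe12b}, remarking only that the proof rests on \cite{Ham11} and that an automorphism of $\SigA$ is determined by its base map. Your injectivity argument is sound as written: there the natural transformations are \emph{given}, and the naturality squares force each component $\io_{j,C}$ at a two-dimensional $C$ to be the restriction of $\io_{j,D}$ for any larger context $D$, so Hamhalter's uniqueness at contexts of dimension $\geq 3$ propagates down.

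The gap is in surjectivity, at the step ``$\kappa_C:=\kappa_D|_C$ \dots is independent of $D$ by the uniqueness just invoked.'' Uniqueness of $\kappa_{D_i}$ on each $D_i$ separately says nothing about whether $\kappa_{D_1}|_C=\kappa_{D_2}|_C$ when $C=\{\hat 1,\hP\}''$ is two-dimensional: there are exactly two $*$-isomorphisms $C\ra\ga(C)$ (namely $\hP\mt\hQ$ and $\hP\mt\hat 1-\hQ$), and $D_1\cap D_2$ may be exactly $C$, so no context of dimension $\geq 3$ mediates between the two choices. Establishing this coherence is precisely the hard, global content of Hamhalter's theorem and is not a formal consequence of the local uniqueness you cite; the clean repair is to invoke \cite{Ham11} in its global form---for $\cA$ neither $\bbC^2$ nor $\mc B(\bbC^2)$, the order automorphism $\ga$ of $\CA$ is induced by a \emph{unique} (quasi-)Jordan $*$-automorphism $T$ of $\cA$---and then set $\kappa_C:=T|_C$, which is coherent by construction, Gelfand duality supplying the components of $\io$. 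A second, smaller slip: a non-maximal two-dimensional context need not sit inside a \emph{three}-dimensional one (in $C([0,1])\oplus C([0,1])$ the context generated by the central projection admits no three-dimensional enlargement, for lack of further projections); what the hypothesis on $\cA$ actually yields is an enlargement of dimension $\geq 3$, possibly infinite, which is all the abelian uniqueness statement requires. Your closing diagnosis of why $\bbC^2$ and $\mc B(\bbC^2)$ must be excluded is correct.
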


Given a von Neumann algebra $\cN$, the set $\VN$ of non-trivial abelian von Neumann subalgebras that share the unit element with $\cN$, partially ordered under inclusion, is called the \emph{context category of the von Neumann algebra $\cN$}. Elements $V,W$ of $\VN$ are called \emph{contexts of $\VN$}.

The spectral presheaf $\SigN$ of a von Neumann algebra $\cN$ is a presheaf over $\VN$, defined in complete analogy to the case of unital $C^*$-algebras (where the base category is $\CA$). Automorphisms of the spectral presheaf $\SigN$ of a von Neumann algebra are defined just as in the case of unital $C^*$-algebras, cf. Def. \ref{Def_AutomOfSigA}. The analogue of Prop. \ref{Prop_RepOfAutcAInAutSigA} also holds:

\begin{proposition}			\label{Prop_RepOfAutcNInAutSigN}
(Prop. 5.5 in \cite{Doe12b}) Let $\cN$ be a von Neumann algebra, and let $\SigN$ be its spectral presheaf. Let $\Aut(\cN)$ be the automorphism group of $\cN$, and let $\Aut(\SigN)$ be the automorphism group of $\SigN$. There is an injective group homomorphism from $\Aut(\cN)$ to $\Aut(\SigN)^{op}$ (that is, there is an injective, contravariant group homomorphism from $\Aut(\cN)$ into $\Aut(\SigN)$), given by
\begin{align}
			\Aut(\cN) &\lra \Aut(\SigN)^{op}\\			\nonumber
			\phi &\lmt \pair{\Phi}{\cG_\phi}=\cG_{\phi}\circ\Phi^*
\end{align}
\end{proposition}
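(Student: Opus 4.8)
The plan is to mirror exactly the construction carried out for the unital $C^*$-algebra case preceding Prop.~\ref{Prop_RepOfAutcAInAutSigA}, substituting the context category $\VN$ for $\CA$ throughout. The key point is that nothing in that earlier argument used any special feature of $\CA$ beyond the facts that (i) it is a poset of abelian subalgebras sharing the unit, and (ii) the abelian subalgebras carry a Gelfand spectrum that behaves functorially under restriction. Since $\VN$ enjoys both properties by construction, the same chain of assignments goes through verbatim.

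First I would verify that each automorphism $\phi\in\Aut(\cN)$ induces an order-automorphism $\tphi:\VN\ra\VN$, $V\mapsto\phi(V)$. This requires checking that $\phi$ maps abelian von Neumann subalgebras to abelian von Neumann subalgebras containing the unit (immediate, since a $*$-automorphism of a von Neumann algebra is a normal $*$-isomorphism onto its image and preserves commutativity, the unit, and weak closure), and that $\phi$ preserves inclusions, so that $\tphi$ is monotone with monotone inverse $\widetilde{\phi^{-1}}$. Next I would produce the essential geometric morphism $\Phi:\SetVNop\ra\SetVNop$ whose inverse image part is precomposition with $\tphi$, exactly as in the display defining $\Phi^*$; that $\Phi$ is essential and geometric follows from $\tphi$ being an order-isomorphism of the base category, which is standard. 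Then, for each $V\in\VN$, the restriction $\phi|_V:V\ra\phi(V)$ is a unital $*$-isomorphism of abelian von Neumann algebras, and Gelfand duality supplies the homeomorphism $\cG_{\phi;V}:\Sigma(\phi(V))\ra\Sigma_V$, $\ld\mapsto\ld\circ\phi|_V$. I would check naturality of the family $(\cG_{\phi;V})_{V\in\VN}$ in the restriction maps of $\SigN$ --- a routine diagram chase turning on the compatibility of character restriction with the isomorphisms $\phi|_V$ --- to obtain the natural isomorphism $\cG_\phi:\Phi^*(\SigN)\ra\SigN$ and hence the automorphism $\pair{\Phi}{\cG_\phi}$ of $\SigN$.

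It then remains to show that $\phi\mapsto\pair{\Phi}{\cG_\phi}$ is a contravariant group homomorphism that is injective. Contravariance (i.e.\ that it lands in $\Aut(\SigN)^{\op}$ as a covariant homomorphism) comes from tracking how composition of automorphisms of $\cN$ corresponds to composition of base maps and inverse-image functors: since $\widetilde{\phi\circ\psi}=\tphi\circ\tilde\psi$ but inverse-image functors compose contravariantly with respect to the underlying maps, the order is reversed, which is precisely what the $\op$ records. For injectivity, I would observe that if $\pair{\Phi}{\cG_\phi}$ is the identity automorphism then in particular the base map $\tphi$ is the identity on $\VN$, forcing $\phi(V)=V$ for every abelian subalgebra $V$; applying this to the one-dimensional-spectrum algebras generated by projections, and then using that the projections generate $\cN$ together with the fact that $\cG_\phi$ acting as the identity pins down $\phi$ on each $V$, one concludes $\phi=\id$.

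The step I expect to require the most care is the naturality of $\cG_\phi$, since this is where the interaction between the base map $\tphi$ and the presheaf structure of $\SigN$ must be handled precisely; but because the restriction maps of $\SigN$ are literally character restrictions and the $\phi|_V$ are compatible with inclusions $V'\hookrightarrow V$, the required square commutes by associativity of composition of the maps involved. Everything else is a transcription of the $C^*$-case, so I would present the proof compactly by noting the analogy and indicating the single substitution $\CA\rightsquigarrow\VN$, verifying only the points above where the von Neumann (rather than merely $C^*$) structure enters, namely normality of $\phi$ and weak closure of images.
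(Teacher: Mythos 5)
Your proposal is correct and follows essentially the same route as the paper, which itself only sketches the construction for the $C^*$-case (base map $\tphi$, essential geometric morphism $\Phi$, Gelfand-duality components $\cG_{\phi;V}$, contravariance from reversal of composition, injectivity from the base map and components pinning down $\phi|_V$ on every context) and then remarks that the analogue carries over verbatim to $\VN$. Your additional checks of the points where the von Neumann structure enters (normality, weak closure of $\phi(V)$) are exactly the right things to verify and raise no issues.
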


If $\cN$ is a von Neumann algebra with projection lattice $\PN$, then the group of automorphisms of the complete orthomodular lattice $\PN$ is denoted $\Aut_{cOML}(\PN)$. Moreover, each von Neumann algebra $\cN$ determines a Jordan algebra, also denoted $\cN$, which has the same elements and linear structure as the von Neumann algebra, and Jordan product defined by
\begin{equation}
			\forall \hA,\hB\in\cN: \hA\cdot\hB = \frac{1}{2}(\hA\hB+\hB\hA).
\end{equation}
A Jordan automorphism of $\cN$ is a bijective (unital) linear map that preserves the Jordan product. The Jordan automorphisms of $\cN$ form a group $\Aut_{Jordan}(\cN)$. 

In \cite{Doe12b}, it was shown:


\begin{theorem}			\label{Thm_GroupIsos}
(Cor. 5.15 in \cite{Doe12b}) Let $\cN$ be a von Neumann algebra not isomorphic to $\bbC^2$ and without summand of type $I_2$, with projection lattice $\PN$, context category $\VN$, associated partial von Neumann algebra $\cN_{part}$, associated Jordan algebra also denoted $\cN$, and spectral presheaf $\SigN$.

The four groups $\Aut_{ord}(\VN)$, $\Aut_{cOML}(\PN)$, $\Aut_{part}(\cN_{part})$ and $\Aut_{Jordan}(\cN)$ are isomorphic. Concretely, every order-isomorphism (base map) $\tT\in\Aut_{ord}(\VN)$ induces a unique automorphism $T:\PN\ra\PN$ of the complete orthomodular lattice of projections, which extends to a partial von Neumann automorphism $T:\cN_{part}\ra \cN_{part}$ and further to a Jordan $*$-automorphism $T:\cN\ra\cN$. Conversely, each Jordan $*$-automorphism restricts to an automorphism of the partial algebra $\cN_{part}$, further to an automorphism of $\PN$, and induces an order-automorphism of $\VN$.

Each base map $\tT:\VN\ra\VN$ induces an automorphism $\pair{\tT}{T^*}:\SigN\ra\SigN$ of the spectral presheaf, and the group $\Aut(\SigN)$ of automorphisms of the spectral presheaf is contravariantly isomorphic to the groups $\Aut_{ord}(\VN)$, $\Aut_{cOML}(\PN)$, $\Aut_{part}(\cN_{part})$ and $Aut_{Jordan}(\cN)$.
\end{theorem}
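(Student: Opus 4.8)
The plan is to prove the four-fold isomorphism by constructing a cycle of group homomorphisms
\[
\Aut_{ord}(\VN)\lra\Aut_{cOML}(\PN)\lra\Aut_{part}(\cN_{part})\lra\Aut_{Jordan}(\cN)\lra\Aut_{ord}(\VN)
\]
and verifying that each of the four composites obtained by going once around the cycle is the identity; since the cycle has length four, this forces every individual arrow to be injective and surjective, hence an isomorphism. The statement about $\Aut(\SigN)$ is then obtained separately by combining Prop.~\ref{Prop_RepOfAutcNInAutSigN} with the rigidity of the spectral presheaf.

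Three of the four arrows are the routine, `algebraic' part. For $\Aut_{cOML}(\PN)\lra\Aut_{part}(\cN_{part})$ I would use the spectral theorem: a self-adjoint $\hA\in\cN$ is encoded by its spectral family $(E^{\hA}_\ld)_{\ld\in\bbR}\subseteq\PN$, and given $T\in\Aut_{cOML}(\PN)$ one sets $T(\hA)$ to be the self-adjoint element with spectral family $(T(E^{\hA}_\ld))_{\ld\in\bbR}$; because $T$ preserves order, orthocomplementation and arbitrary suprema and infima of projections, the image family is again monotone and right-continuous, so $T(\hA)$ is well defined, and commuting self-adjoint elements have spectral projections in a common Boolean sublattice preserved by $T$, so $T$ respects the partial product. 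For $\Aut_{part}(\cN_{part})\lra\Aut_{Jordan}(\cN)$ I would invoke the polarisation identity $\hA\cdot\hB=\frac{1}{2}\big((\hA+\hB)^2-\hA^2-\hB^2\big)$: sums and squares are available in $\cN_{part}$ (each element commutes with itself), so a partial automorphism already preserves the total Jordan product; conversely a Jordan $*$-automorphism restricts to the ordinary product on each commuting pair, so the two groups literally coincide. Finally $\Aut_{Jordan}(\cN)\lra\Aut_{cOML}(\PN)$ is restriction to projections, characterised Jordan-theoretically by $p=p^*$, $p\cdot p=p$, with order $p\leq q\Leftrightarrow p\cdot q=p$ and orthocomplement $1-p$; normality of Jordan $*$-automorphisms of von Neumann algebras guarantees that arbitrary joins and meets are preserved, so one lands in $\Aut_{cOML}(\PN)$.

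The substantive arrow, and the main obstacle, is $\Aut_{ord}(\VN)\lra\Aut_{cOML}(\PN)$: reconstructing the complete orthomodular lattice together with its automorphisms from the bare order structure of $\VN$. Here I would proceed through atoms: the minimal nontrivial contexts are the two-dimensional algebras $C_p=\bbC p\oplus\bbC p^\perp$, and since $C_p=C_{p^\perp}$ each atom records only the unordered pair $\{p,p^\perp\}$. An order-automorphism $\tT$ permutes atoms, and the real work is to resolve the $p\leftrightarrow p^\perp$ ambiguity and to glue the resulting local assignments, across all contexts containing a given projection, into a single globally consistent map $T:\PN\ra\PN$. This gluing is exactly where the hypotheses enter: for $\bbC^2$ and for type $I_2$ summands the poset degenerates (nontrivial contexts are simultaneously atoms and maximal, carrying no containment information), the disambiguation fails, and the reconstruction breaks down. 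This step is precisely the content of Hamhalter's theorem \cite{Ham11}, on which I would rely, and closing the cycle then amounts to checking that a Jordan $*$-automorphism $\psi$ induces back the order-automorphism $V\mapsto\psi(V)$ one started from, which is immediate from the explicit constructions above.

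For the spectral presheaf, given a base map $\tT\in\Aut_{ord}(\VN)$ I would pass to the associated Jordan $*$-automorphism $T$ and note that, on each abelian context $V$, the Jordan product coincides with the ordinary product, so $T|_V\colon V\ra\tT(V)$ is an honest unital $*$-isomorphism. Gelfand duality then yields componentwise homeomorphisms exactly as in the construction of $\cG_\phi$ preceding Prop.~\ref{Prop_RepOfAutcAInAutSigA}, and these assemble into the natural isomorphism $T^*$, giving the automorphism $\pair{\tT}{T^*}$ of $\SigN$ with base map $\tT$. By the rigidity established in \cite{Doe12b} (the von Neumann analogue of Thm.~\ref{Thm_GroupIsosCStar}), every automorphism of $\SigN$ is determined by its base map, so this assignment is a bijection onto $\Aut(\SigN)$; since Prop.~\ref{Prop_RepOfAutcNInAutSigN} makes it contravariant, one obtains the contravariant isomorphism of $\Aut(\SigN)$ with all four groups.
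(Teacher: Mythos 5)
The theorem is imported from \cite{Doe12b} (Cor.\ 5.15) and not reproved in the present paper, so the comparison is with the argument of the cited source. Your overall architecture --- a cycle of four homomorphisms whose cyclic composites are the identity, the atom-based reconstruction of $\PN$ from the order structure of $\VN$ with the $p\leftrightarrow p^\perp$ disambiguation delegated to the Hamhalter/D\"oring--Harding result, and the spectral-presheaf statement obtained from componentwise Gelfand duality together with rigidity and Prop.\ \ref{Prop_RepOfAutcNInAutSigN} --- is essentially the route taken there, and your treatment of the $\Aut(\SigN)$ part is correct.

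There is, however, a genuine gap at the arrow $\Aut_{part}(\cN_{part})\ra\Aut_{Jordan}(\cN)$: the polarisation argument presupposes exactly what has to be proved. In $\cN_{part}$ the sum $\hA+\hB$ of two \emph{non-commuting} self-adjoint elements is not produced by any partial-algebra operation from $\hA$ and $\hB$, so a partial automorphism $T$ is only guaranteed to satisfy $T(\hA+\hB)=T(\hA)+T(\hB)$ on commuting pairs. The identity $\hA\cdot\hB=\tfrac{1}{2}\bigl((\hA+\hB)^2-\hA^2-\hB^2\bigr)$ therefore only gives $T(\hA\cdot\hB)=\tfrac{1}{2}\bigl(T(\hA+\hB)^2-T(\hA)^2-T(\hB)^2\bigr)$, which is not $T(\hA)\cdot T(\hB)$ unless additivity on non-commuting pairs is already known; the two groups do not ``literally coincide'' by a formal identity. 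The passage from quasi-linear (partial) automorphisms to genuinely linear Jordan $*$-automorphisms is a substantive step, and it is here --- not only in the atom-disambiguation step --- that the exclusion of type $I_2$ summands is needed a second time. The repair, and the route actually taken in \cite{Doe12b}, is to go through the projection lattice: restrict the partial automorphism to an automorphism of the complete orthomodular lattice $\PN$ and invoke Dye's theorem (equivalently, the Bunce--Wright generalisation of Gleason's theorem) to extend it to a unique Jordan $*$-automorphism of $\cN$. A smaller instance of the same issue occurs in your arrow $\Aut_{cOML}(\PN)\ra\Aut_{part}(\cN_{part})$, where you check the partial product but not partial additivity; this one is harmless, because within a fixed context $V$ the lattice automorphism restricts to a complete Boolean isomorphism $\PV\ra\mc{P}(T(V))$, which extends uniquely to a normal $*$-isomorphism $V\ra T(V)$ of abelian algebras --- but it is that extension, rather than the spectral-family recipe alone, that should carry the argument.
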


\paragraph{Simplifying notation} Let $\cA$ be a unital $C^*$-algebra, and let $\SigA$ be its spectral presheaf. If $\pair{\Ga}{\io}:\SigB\ra\SigA$ is an isomorphism (or $\pair{\Ga}{\io}:\SigA\ra\SigA$ is an automorphism), we will often simply write
\begin{align}
			\pio : \SigA\lra\SigA
\end{align}
instead of $\pair{\Ga}{\io}$, leaving the essential geometric automorphism $\Ga$ implicit. If $\phi:\cA\ra\cA$ is an algebra automorphism, then we will use the notation $\pphi:\SigA\ra\SigA$ for the automorphism induced by it (so $\pphi=\pair{\Phi}{\cG_\phi}:\SigA\ra\SigA$).

Analogous remarks apply for automorphisms of the spectral presheaf of a von Neumann algebra.

\section{Action of the unitary group and flows on the spectral presheaf}			\label{Sec_UnitariesAndFlows}
In subsection \ref{Subsec_InnerAutoms}, we consider some aspects of inner automorphisms and the action of the unitary group $\UA$ of a unital $C^*$-algebra $\CA$ on its spectral presheaf $\SigA$. In subsection \ref{Subsec_FlowsOnSig}, flows on the spectral presheaf are defined. It is shown that each one-parameter group of unitaries induces a flow on $\Sig$.

\subsection{Inner automorphisms}			\label{Subsec_InnerAutoms}
Let $\cA\in\Ob\uC$ be a unital $C^*$-algebra, and let $\UA$ be the unitary group of $\cA$. Each element $\hU\in\UA$ induces an inner automorphism
\begin{align}
			\phiU:\cA &\lra \cA\\			\nonumber
			\hA &\lmt \hU\hA\hU^*
\end{align}
of the algebra $\cA$. This corresponds to an automorphism of $\SigA$, the spectral presheaf of $\cA$. Concretely, $\phiU$ induces an order automorphism
\begin{align}
			\widetilde{\phiU}:\CA &\lra \CA\\			\nonumber
			C &\lmt \hU C\hU^*,
\end{align}
which gives an essential geometric automorphism
\begin{equation}
			\PhiU:\SetCAop \lra \SetCAop.
\end{equation}
Using the inverse image part, we map $\SigA$ to $\PhiU^*(\SigA)$. We then consider the natural isomorphism
\begin{equation}
			\cG_{\hU}:\PhiU^*(\SigA) \lra \SigA
\end{equation}
with components
\begin{align}
			\cG_{\hU;C}:\PhiU^*(\SigA)_C &\lra \SigA_C\\			\nonumber
			\ld &\lmt \ld\circ\phiU|_C.
\end{align}
Then
\begin{equation}
			\pair{\PhiU}{\cG_{\hU}}=\cG_{\hU}\circ\PhiU^*:\SigA \lra \SigA
\end{equation}
is the automorphism of $\SigA$ induced by $\hU\in\UA$. We will often write 
\begin{align}
			\pphi_{\hU}:\SigA \lra \SigA
\end{align}
for this automorphism. It is clear by construction that 
\begin{equation}
			\pphi_{\hU}^{-1}=(\pair{\PhiU}{\cG_{\hU}})^{-1}=\pair{\Phi_{\hU^*}}{\cG_{\hU^*}}=\pphi_{\hU^*}
\end{equation}
is the inverse automorphism, and that $\pphi_{\hat 1}=\pair{\Phi_{\hat 1}}{\cG_{\hat 1}}=\Id_{\SigA}$.

Prop. \ref{Prop_RepOfAutcAInAutSigA} implies
\begin{proposition}			\label{Prop_AntiRepOfUAInAutSigA}
Let $\cA$ be a unital $C^*$-algebra, and let $\SigA$ be its spectral presheaf. There is a group homomorphism
\begin{align}
			\UA &\lra \Aut(\SigA)^{\op}\\			\nonumber
			\hU &\lmt \pphi_{\hU}=\pair{\PhiU}{\cG_{\hU}}
\end{align}
from the unitary group of $\cA$ into the opposite of the group of automorphisms of the spectral presheaf $\SigA$ of $\cA$. (This is the same as a contravariant group homomorphism from $\UA$ to $\Aut(\SigA)$.)
\end{proposition}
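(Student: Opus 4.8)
The plan is to obtain the map of the statement as a composite of two group homomorphisms, thereby reducing everything to Proposition~\ref{Prop_RepOfAutcAInAutSigA}. The first homomorphism sends a unitary to the inner automorphism it induces,
\[
 \alpha:\UA\lra\Aut(\cA),\qquad \hU\lmt\phiU,
\]
and the second is the injective homomorphism $\beta:\Aut(\cA)\ra\Aut(\SigA)^{\op}$, $\phi\mt\pair{\Phi}{\cG_\phi}$, supplied by Proposition~\ref{Prop_RepOfAutcAInAutSigA}. The claim then follows by composing $\beta$ with $\alpha$.

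First I would check that $\alpha$ is a group homomorphism into $\Aut(\cA)$. For $\hU,\hat V\in\UA$ and any $\hA\in\cA$,
\[
 \phi_{\hU\hat V}(\hA)=(\hU\hat V)\hA(\hU\hat V)^*=\hU(\hat V\hA\hat V^*)\hU^*=(\phiU\circ\phi_{\hat V})(\hA),
\]
so $\phi_{\hU\hat V}=\phiU\circ\phi_{\hat V}$ and $\alpha$ is covariant; moreover $\phi_{\hat 1}=\id$, so $\alpha$ respects units. Note that $\alpha$ need not be injective --- its kernel is the group of central unitaries $\UA\cap Z(\cA)$ --- which is exactly why the statement claims only a homomorphism, unlike the injective map of Proposition~\ref{Prop_RepOfAutcAInAutSigA}.

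Next I would verify that $\beta\circ\alpha$ really sends $\hU$ to the automorphism $\pphi_{\hU}=\pair{\PhiU}{\cG_{\hU}}$ defined in the preceding text, which is essentially a matter of matching notation: for $\phi=\phiU$ the base map $\tphi$ of Proposition~\ref{Prop_RepOfAutcAInAutSigA} is precisely $\widetilde{\phiU}:C\mt\hU C\hU^*$, the induced essential geometric automorphism is $\PhiU$, and the components $\cG_{\phi;C}(\ld)=\ld\circ\phi|_C$ specialise to $\cG_{\hU;C}(\ld)=\ld\circ\phiU|_C$. Hence $\beta(\alpha(\hU))=\pphi_{\hU}$, as required.

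Finally, since a composite of group homomorphisms is a group homomorphism, $\beta\circ\alpha:\UA\ra\Aut(\SigA)^{\op}$ is the desired map. The argument is entirely formal once $\alpha$ is seen to be a homomorphism; the only point demanding any care is the variance bookkeeping in the last step, namely confirming that composing the \emph{covariant} $\alpha:\UA\ra\Aut(\cA)$ with the \emph{contravariant} $\beta$ (which lands in the opposite group) yields a homomorphism into $\Aut(\SigA)^{\op}$, equivalently an anti-homomorphism $\UA\ra\Aut(\SigA)$, matching the parenthetical reformulation.
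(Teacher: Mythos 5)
Your proposal is correct and follows essentially the same route as the paper, which likewise obtains the map by composing $\hU\mt\phiU$ with the injective homomorphism of Prop.~\ref{Prop_RepOfAutcAInAutSigA} (the paper simply writes ``Prop.~\ref{Prop_RepOfAutcAInAutSigA} implies'' after constructing $\pphi_{\hU}$ explicitly). Your additional remarks on the non-injectivity coming from central unitaries and on the variance bookkeeping match the paper's surrounding discussion.
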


This group homomorphism is not injective, though, because if $\hU$ is in the center $\mc Z(\cA)$ of $\cA$, then the inner automorphism $\phi_{\hU}:\cA\ra\cA$ induced by $\hU$ is the identity. It is straightforward to remedy this: let $\UA_0:=\UA\cap\mc Z(\cA)$, and let\footnote{The following quotient is not trivial in physical terms, at least not if more sophisticated situations are considered: in the case of multi-particle systems and degeneracy, the trivial dynamics (described by $\UA_0$) may lead to relative phases. Other interpretational issues arise from non-trivial experiments involving weak measurements. I thank Bertfried Fauser for pointing out these aspects to me. Here, we focus on a single system and its unitary evolution, for which dividing out $\UA_0$ is unproblematic.}
\begin{equation}
			\UA_{\on{proper}}:=\UA/\UA_0.
\end{equation}
We obtain:

\begin{corollary}			\label{Cor_FaithfulAntiRepOfUA}
Let $\cA$ be a unital $C^*$-algebra, and let $\SigA$ be its spectral presheaf. There is an injective group homomorphism
\begin{align}
			\UA_{\on{proper}} &\lra \Aut(\SigA)^{\op}\\			\nonumber
			[\hU] &\lmt \pphi_{\hU}=\pair{\PhiU}{\cG_{\hU}},
\end{align}
where $\hU$ is a representative of the equivalence class $[\hU]$.
\end{corollary}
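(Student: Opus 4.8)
The plan is to exhibit the homomorphism of Prop.~\ref{Prop_AntiRepOfUAInAutSigA} as a composite and then invoke the first isomorphism theorem. First I would note that the assignment $\hU\mt\pphi_{\hU}$ factors as
\begin{equation}
			\UA \lra \Aut(\cA) \lra \Aut(\SigA)^{\op},
\end{equation}
where the first arrow is the inner-automorphism map $\hU\mt\phiU$ and the second is the injective group homomorphism $\phi\mt\pphi$ of Prop.~\ref{Prop_RepOfAutcAInAutSigA}. This is immediate from the construction in Section~\ref{Sec_UnitariesAndFlows}: by definition $\pphi_{\hU}=\pair{\PhiU}{\cG_{\hU}}$ is precisely the image of $\phiU$ under the representation of Prop.~\ref{Prop_RepOfAutcAInAutSigA}, since $\PhiU$ is the essential geometric automorphism induced by $\widetilde{\phiU}$ and $\cG_{\hU}=\cG_{\phiU}$. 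One also checks that $\hU\mt\phiU$ is a genuine (covariant) group homomorphism, from $\phi_{\hU\hat V}=\phiU\circ\phi_{\hat V}$.

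Next I would compute the kernel of the composite. Because the second arrow is injective (Prop.~\ref{Prop_RepOfAutcAInAutSigA}), the kernel of the composite coincides with the kernel of the first arrow. A unitary $\hU$ lies in the latter exactly when $\phiU=\id_\cA$, that is, when $\hU\hA\hU^*=\hA$, equivalently $\hU\hA=\hA\hU$, for all $\hA\in\cA$. This is precisely the condition $\hU\in\mc Z(\cA)$, so the kernel is exactly $\UA_0=\UA\cap\mc Z(\cA)$. In particular $\pphi_{\hU}=\Id_{\SigA}$ if and only if $\hU$ is a central unitary.

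Finally, since elements of $\UA_0$ commute with every unitary, $\UA_0$ is central, hence normal, in $\UA$, so the quotient $\UA_{\on{proper}}=\UA/\UA_0$ is a well-defined group. The first isomorphism theorem then provides an injective group homomorphism $\UA/\UA_0\lra\Aut(\SigA)^{\op}$ carrying $[\hU]$ to $\pphi_{\hU}$, as claimed; well-definedness of the map on equivalence classes is automatic once $\UA_0$ is identified as the kernel. I do not anticipate a genuine obstacle here, as all the substance is supplied by the injectivity assertion of Prop.~\ref{Prop_RepOfAutcAInAutSigA}; the only step calling for slight care is the elementary verification that the kernel of the inner-automorphism map equals the central unitaries and is not strictly larger.
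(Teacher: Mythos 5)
Your proposal is correct and follows essentially the same route as the paper, which gives no formal proof but motivates the quotient by $\UA_0=\UA\cap\mc Z(\cA)$ in exactly this way: the kernel of $\hU\mt\phiU$ is the set of central unitaries, the map $\phi\mt\pphi$ of Prop.~\ref{Prop_RepOfAutcAInAutSigA} is injective, and the first isomorphism theorem yields the injection on $\UA_{\on{proper}}$. Your explicit factorisation through $\Aut(\cA)$ and the verification that the kernel is not strictly larger than $\UA_0$ are just careful spellings-out of what the paper leaves implicit.
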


\subsection{Flows on the spectral presheaf}			\label{Subsec_FlowsOnSig}
We switch to von Neumann algebras from here on. This can be seen as moving from a unital $C^*$-algebra to its enveloping von Neumann algebra. It is clear that each element $\hU$ of the unitary group $\UN$ of a von Neumann algebra $\cN$ induces an inner automorphism $\phi_{\hU}$ of $\cN$, and that the analogues of Prop. \ref{Prop_AntiRepOfUAInAutSigA} and Cor. \ref{Cor_FaithfulAntiRepOfUA} hold for von Neumann algebras.

Let $\hH$ be a self-adjoint operator affiliated with $\cN$, possibly unbounded (for the theory of unbounded self-adjoint operators, see e.g. \cite{KadRin83/86}). By Stone's theorem, $\hH$ induces a strongly continuous one-parameter group
\begin{align}
			U: \bbR &\lra \UN\\			\nonumber
			t &\lmt e^{it\hH}
\end{align}
of unitary operators in $\cN$, i.e., a representation of $\bbR$ in $\UN$. Each $\hU_t:=U(t)$ gives an inner automorphism
\begin{align}
			\phi_{\hU_t}: \cN &\lra \cN\\			\nonumber
			\hA &\lmt \hU_t\hA\hU_{t}^*.
\end{align}
This suggests the following definition:
\begin{definition}
A representation
\begin{align}
			F: \bbR &\lra \Aut(\SigN)\\			\nonumber
			t &\lmt \pio_t = \pair{\Ga_t}{\io_t}
\end{align}
of the additive group of real numbers by automorphisms of the spectral presheaf is called a \emph{flow on the spectral presheaf $\Sig$}.
\end{definition}

Note that if $F$ is a flow on $\SigN$, then $F(0)=\Id_{\SigN}$, the identity automorphism on $\SigN$. Let $\hH$ be a self-adjoint operator affiliated with $\cN$, let $(\hU_t)_{t\in\bbR}$ be the strongly continuous one-parameter group of unitaries in $\cN$ given by $\hU_t=e^{it\hH}$ for all $t$, and let $(\phi_{\hU_t})_{t\in\bbR}$ be the one-parameter group of inner automorphisms of $\cN$ corresponding to $(\hU_t)_{t\in\bbR}$. Then $(\phi_{\hU_t})_{t\in\bbR}$ is a representation of $\bbR$ in $\Aut(\cN)$, and by Prop. \ref{Prop_RepOfAutcNInAutSigN}, there is a group homomorphism from $\Aut(\cN)$ into $\Aut(\SigN)^{\op}$, so $(\phi_{\hU_t})_{t\in\bbR}$ (resp. $(\hU_t)_{t\in\bbR}$) determines a flow
\begin{align}
			F_{\hH}: \bbR &\lra \Aut(\SigN)\\			\nonumber
			t &\lmt \pphi_{\hU_t}=\pair{\Phi_{\hU_t}}{\cG_{\hU_t}}
\end{align}
More generally, by Thm. \ref{Thm_GroupIsos} every one-parameter group $(J_t)_{t\in\bbR}$ of Jordan $*$-automorphisms of $\cN$ determines a flow on the spectral presheaf $\SigN$.

\section{Flows and time evolution of physical systems}			\label{Sec_TimeEvolution}
It is interesting to consider flows defined not only on the spectral presheaf, but also on other structures associated with it, because this allows describing the time evolution of quantum systems. In the picture we will develop in this section, the spectral presheaf is interpreted as a state space of the quantum system, analogous to the state space (phase space) of a classical system. 

As usual in quantum theory, there are two formulations of time evolution: a Schr\"odinger picture, in which states change in time and physical quantities stay fixed, and a Heisenberg picture, in which physical quantities change in time and states remain fixed. We will discuss the analogous transformations with respect to structures associated with the spectral presheaf. We will consider a von Neumann algebra $\cN$, interpreted as the algebra of physical quantities of a quantum system, and its spectral presheaf $\SigN$, interpreted physically as a state space for the quantum system. Tom Woodhouse developed some of these aspects in his M.Sc. thesis \cite{Woo11}.

In fact, instead of considering the representation of physical quantities directly (which in the topos approach to quantum theory are given by certain arrows $\breve\delta(\hA):\SigN\ra\Rlr$ from the spectral presheaf to a presheaf $\Rlr$ of generalised values, permitting real intervals as `unsharp' values, see \cite{DoeIsh08c,Doe11b}), we will be concerned with \emph{propositions} about the values of physical quantities. The basic propositions are written symbolically as ``$\Ain\de$'', which is interpreted as ``the physical quantity $A$ has a value in the Borel set $\io\subseteq\bbR$''. Mathematically, such propositions are represented by suitable subobjects of the spectral presheaf $\SigN$. In the Schr\"odinger picture, the propositions are fixed, but in the Heisenberg picture, they change in time, because the physical quantities do.

States of the von Neumann algebra $\cN$ correspond to (generalised) probability measures on the spectral presheaf $\SigN$. This is structurally analogous to classical physics, where states are given by probability measures on the state space of the system. We discuss the Heisenberg picture in subsection \ref{Subsec_HeisenbergPic}, the Schr\"odinger picture in subsection \ref{Subsec_SchroedingerPic}. Compatibility between them will be discussed in the section \ref{Sec_Compatibility}.

\subsection{Propositions, clopen subobjects and Heisenberg picture}			\label{Subsec_HeisenbergPic}
Let $A$ be a physical quantity of the quantum system under consideration, and let $\hA\in\cN_{\sa}$ be the self-adjoint operator in the von Neumann algebra $\cN$ that represents $A$.\footnote{We assume that all relevant physical quantities can be represented by bounded operators in $\cN$. It is straightforward to treat also unbounded operators affiliated with $\cN$, since we just consider propositions such as ``$\Ain\de$'' about the physical quantities, which by the spectral theorem correspond to projection operators. For a self-adjoint operator $\hA$ affiliated with $\cN$, all these projections lie in the algebra $\cN$.} Let ``$\Ain\de$'' be a proposition about the value of $A$. Since we aim to formulate a form of Heisenberg picture, we know that $A$ and hence ``$\Ain\de$'' will change in time, so we will add a time label $t\in\bbR$ to the proposition, now writing ``$\Ain\de;t$''.\footnote{Alternatively, we could use the notation ``$A(t)\in\io$'', which would suggest more clearly that the physical quantity $A$ is changing in time. In any case, this is just a symbolic notation; the key point of course is how such propositions are represented.}

By the spectral theorem, a proposition ``$\Ain\de;t$'' corresponds to a projection $\hP_t\in\PN$, the projection lattice of $\cN$. For each $V\in\VN$, let
\begin{equation}			\label{Def_OuterDasOfProjs}
			\deo_V(\hP_t):=\bmeet\{\hQ\in\PV \mid \hQ\geq\hP_t\}.
\end{equation}
For each abelian von Neumann algebra $V$, there is an isomorphism
\begin{align}			\label{Def_alphaV}
			\alpha_V: \PV &\lra \Cl(\SigN_V)\\			\nonumber
			\hP &\lmt \{\ld\in\Sigma_V \mid \ld(\hP)=1\}
\end{align}
between the complete Boolean algebra of projections in $V$ and the complete Boolean algebra $\Cl(\SigN_V)$ of clopen subsets of the Gelfand spectrum of $V$. Given a projection $\hP\in\PV$, we will write $S_{\hP}:=\alpha_V(\hP)$ for the corresponding clopen subset of $\SigN_V$. Conversely, given $S\in\Cl(\SigN_V)$, we will write $\hP_S:=\alpha_V^{-1}(S)$ for the corresponding projection in $V$.

In particular, the projection $\deo_V(\hP_t)\in\PV$ corresponds to the clopen subset
\begin{equation}
			S_{\deo_V(\hP_t)}=\alpha_V(\deo_V(\hP_t))\subseteq\SigN_V.
\end{equation}
Letting $V$ vary over $\VN$, we obtain a family $(S_{\deo_V(\hP_t)})_{V\in\VN}$ of clopen subsets, one in each component $\SigN_V$ of the spectral presheaf. It is straightforward to show that these clopen subsets form a subobject of the spectral presheaf $\SigN$, that is, for all inclusions $i_{V'V}:V'\hookrightarrow V$, it holds that
\begin{equation}
			\SigN(i_{V'V})(S_{\deo_V(\hP_t)})=\{\ld|_{V'} \mid \ld\in S_{\deo_V(\hP_t)}\} \subseteq S_{\deo_{V'}(\hP_t)}.
\end{equation}
In fact, equality holds, $\SigN(i_{V'V})(S_{\deo_V(\hP_t)})=S_{\deo_{V'}(\hP_t)}$, as was shown in \cite{DoeIsh08b}. We write
\begin{equation}			\label{Def_OuterDasOfProj}
			\ps\deo(\hP_t):=(S_{\deo_V(\hP_t)})_{V\in\VN}
\end{equation}
for this subobject, which is called the \emph{outer daseinisation of $\hP_t$}. The subobject $\ps\deo(\hP_t)$ is the representative of the proposition ``$\Ain\de;t$'' in the topos approach.\footnote{In classical physics, a proposition like ``$\Ain\de;t$'' is represented by a (Borel) subset $S_t$ of the state space $\Sigma$ of the system. The spectral presheaf $\SigN$ is the analogue of the state space $\Sigma$ for the quantum case, and the subobject $\ps S_t=\ps\deo(\hP_t)$ is the analogue of the subset $S_t$.} For a detailed discussion of these aspects, see \cite{DoeIsh08b,Doe11b,Doe12}.

\begin{definition}
A subobject $\ps S$ of $\SigN$ is called \emph{clopen} if, for each $V\in\VN$, the component $\ps S_V$ is a clopen subset of $\SigN_V$. The set of clopen subobjects is denoted $\Subcl\SigN$. There is a partial order on $\Subcl\SigN$, given by
\begin{equation}
			\forall\ps S_1,\ps S_2\in\Subcl\SigN:\ps S_1\leq\ps S_2 \quad :\Longleftrightarrow\quad (\forall V\in\VN:\ps S_{1;V}\subseteq\ps S_{2;V}).
\end{equation}
\end{definition}
With respect to this order, $\Subcl\SigN$ is a complete distributive lattice. It was shown in \cite{DoeIsh08b} that $\Subcl\SigN$ is a (complete) Heyting algebra, and in \cite{Doe12} that it is a complete bi-Heyting algebra. All the subobjects of the form $\ps\deo(\hP_t)$ that arise from daseinisation are clopen subobjects, and hence the algebra $\Subcl\SigN$ is interpreted as an algebra of propositions. $\Subcl\SigN$ plays a central role in the new form of logic for quantum systems arising from the topos approach.

We will define the action of the unitary group $\UN$ on $\Subcl\SigN$ and time evolution in the Heisenberg picture in terms of flows on $\Subcl\SigN$, before coming back to elementary propositions of the form ``$\Ain\de;t$'' and their representing subobjects $\ps\deo(\hP_t)$ at the end of this subsection.

\begin{remark}
The complete bi-Heyting algebra $\Subcl\SigN$ can also be interpreted as the analogue of the complete Boolean algebra of measurable subsets modulo null subsets of a measure space. In fact, using $\SigN$ as a sample space for a quantum system, with $\Subcl\SigN$ as the algebra of measurable subsets, all basic structures of quantum probability can be formulated in a way that is structurally completely analogous to classical probability, see \cite{DoeDew12b} (and \cite{DoeDew12a}). We will make use of this interpretation of $\Subcl\SigN$ in subsection \ref{Subsec_SchroedingerPic} when representing quantum states as probability measures on $\SigN$, with the elements of $\Subcl\SigN$ as measurable sub`sets' (or rather, subobjects).
\end{remark}

Let $\pio=\pair{\Ga}{\io}$ be an automorphism of $\SigN$ with underlying base map $\ga:\VN\ra\VN$, and let $\ps S\in\Subcl\SigN$ be a clopen subobject. Then
\begin{equation}
			\forall V\in\VN: (\Ga^*(\ps S))_V=\ps S_{\ga(V)}
\end{equation}
and, by restricting $\io_V:\SigN_{\ga(V)}\ra\SigN_V$ to $(\Ga^*(\ps S))_V$, we have $\io_V((\Ga^*(\ps S))_V)\subseteq\SigN_V$ for all $V\in\VN$. Moreover, if $V'\subset V$, then
\begin{equation}
			(\Ga^*(\ps S))_{V'}=\ps S_{\ga(V')}\subseteq\ps S_{\ga(V)},
\end{equation}
because $\ps S$ is a subobject. Furthermore,
\begin{equation}
			\io_{V'}((\Ga^*(\ps S))_{V'})\subseteq\io_V(\ps S_{\ga(V)}),
\end{equation}
so the components $\io_V(\ps S_{\ga(V)})$, $V\in\VN$, form a subobject of $\Sig$, which we denote $\pio(\ps S)$. Clearly, this subobject is also clopen, so each automorphism $\pio=\pair{\Ga}{\io}$ of $\SigN$ induces a bijection
\begin{align}
			\ptio:\Subcl\SigN &\lra \Subcl\SigN\\			\nonumber
			\ps S &\lmt \io(\Ga^*(\ps S)).
\end{align}

Let $\ps S_1,\ps S_2\in\Subcl\SigN$ be two clopen subobjects such that $\ps S_1\leq\ps S_2$. Then, for all $V\in\VN$,
\begin{align}
			(\ptio(\ps S_1))_V &= \io_V(\Ga^*(\ps S_1))\\
			&= \io_V(\ps S_{1;\ga(V)})\\
			&\subseteq \io_V(\ps S_{2;\ga(V)})\\
			&= (\ptio(\ps S_2))_V,
\end{align}
so an automorphism $\ptio$ of $\Subcl\SigN$ preserves the order. $\ptio$ has an inverse, so it also reflects the order and hence is an order-automorphism. This implies that $\ptio$ preserves the bi-Heyting structure on $\Subcl\SigN$.

If we consider an automorphism $\pio = \pair{\Ga}{\io}:\SigN\ra\SigN$ of the spectral presheaf as the analogue of a measurable function, it is natural to use the inverse image (and not the forward image, as we did so far) of this `function' to act on $\Subcl\SigN$, which is the analogue of the algebra of measurable subsets.

We assume from now on that $\cN$ is not isomorphic to $\bbC^2$ and has no type $I_2$ summand. Since $\pio = \pair{\Ga}{\io}:\SigN\ra\SigN$ is an automorphism, induced by a unique base map $\ga:\VN\ra\VN$ by Thm. \ref{Thm_GroupIsos}, the inverse image on clopen subobjects is determined by the inverse transformation $\pio^{-1} = \pair{\Ga}{\io}^{-1}:\SigN\ra\SigN$ (on the spectral presheaf itself), which is the automorphism induced by the base map $\ga^{-1}:\VN\ra\VN$. Let $T:\cN\ra\cN$ denote the Jordan $*$-automorphism corresponding to $\pio$ by Thm. \ref{Thm_GroupIsos}.
Clearly, $T^{-1}:\cN\ra\cN$ is the Jordan $*$-automorphism corresponding to $\pio^{-1}$. Moreover,
\begin{equation}
			\pio^{-1}=\pair{\Ga}{\io}^{-1}=\pair{\Ga^{-1}}{\io^{-1}},
\end{equation}
where $\Ga^{-1}:\SetVNop\ra\SetVNop$ is the essential geometric automorphism induced by $\ga^{-1}:\VN\ra\VN$, and $\io^{-1}:(\Ga^{-1})^*(\SigN)\ra\SigN$ is the natural isomorphism with components, for all $V\in\VN$,
\begin{align}			\label{Eq_ComponentOfeta-1}
			\io^{-1}_V:((\Ga^{-1})^*(\SigN))_V=\SigN_{\ga^{-1}(V)} &\lra \SigN_V\\			\nonumber
			\ld &\lra \ld \circ T^{-1}|_V,
\end{align}
that is, precomposition with $T^{-1}|_V:V\ra\ga^{-1}(V)$, the restriction of the Jordan $*$-automorphism $T^{-1}$ to $V\in\VN$.

We will write
\begin{align}
			\ptio^{-1}:\Subcl\SigN &\lra \Subcl\SigN\\			\nonumber
			\ps S &\lra \io^{-1}((\Ga^{-1})^*(\ps S))
\end{align}
for the automorphism of $\Subcl\SigN$ induced by $\pio^{-1}=\pair{\Ga^{-1}}{\io^{-1}}$. Note that the assignment $\pio\mt\ptio^{-1}$ is contravariant. By Thm. \ref{Thm_GroupIsos}, the assignment $\pio\mt T$ of the Jordan $*$-automorphism $T$ of $\cN$ to an automorphism $\pio=\pair{\Ga}{\io}$ of $\SigN$ is also contravariant, so we have shown:

\begin{proposition}			\label{Prop_InjGroupHomomAut(SigN)ToAut(SubclSigN)}
Let $\cN$ be a von Neumann algebra. There is an injective group homomorphism
\begin{align}
			\Aut(\SigN) &\lra \Aut_{biHeyt}(\Subcl\SigN)^{\op}\\			\nonumber
			\pio=\pair{\Ga}{\io} &\lmt \ptio^{-1}
\end{align}
from the automorphism group of $\SigN$ into the opposite of the automorphism group of the complete bi-Heyting algebra $\Subcl\SigN$ of clopen subobjects. If $\cN$ is not isomorphic to $\bbC^2$ and has no type $I_2$ summand, there is an injective group homomorphism
\begin{equation}
			\Aut_{Jordan}(\cN) \lra \Aut_{biHeyt}(\Subcl\SigN)
\end{equation}
from the group of Jordan $*$-automorphisms of $\cN$ into the group of automorphisms of the complete bi-Heyting algebra $\Subcl\SigN$.
\end{proposition}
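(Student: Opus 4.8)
The plan is to assemble both homomorphisms from essentially formal steps, the only substantive point being the injectivity of the first map. The preceding discussion has already shown that each $\pio=\pair{\Ga}{\io}$ induces an order-automorphism $\ptio$ of $\Subcl\SigN$, and that an order-automorphism of this lattice preserves its bi-Heyting structure, so that $\ptio^{-1}$ lies in $\Aut_{biHeyt}(\Subcl\SigN)$. As already noted there, the assignment $\pio\mapsto\ptio^{-1}$ is contravariant (forward image respects composition and identities, and passing to the map induced by $\pio^{-1}$ reverses the order of composition), so it is a group homomorphism into the opposite group $\Aut_{biHeyt}(\Subcl\SigN)^{\op}$. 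It therefore remains only to prove injectivity, and since this is a homomorphism it suffices to show that $\ptio=\Id$ forces $\pio=\Id$, i.e. forces both $\ga=\id$ and $\io=\id$.

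Recovering the base map $\ga$ is the step I expect to be the main obstacle, and I would extract it from the action on the outer daseinisation subobjects $\ps\deo(\hat R)$, $\hat R\in\PN$. Fix $V$ and set $W=\ga(V)$. Since $\ptio=\Id$ fixes each $\ps\deo(\hat R)$, evaluating at $V$ yields $\io_V(S_{\deo_W(\hat R)})=S_{\deo_V(\hat R)}$ for every $\hat R$. The homeomorphism $\io_V\colon\SigN_W\to\SigN_V$ transports clopen sets and so, through $\alpha_W$ and $\alpha_V$, induces a Boolean isomorphism $\beta\colon\mc P(W)\to\PV$; in these terms the identities become $\beta(\deo_W(\hat R))=\deo_V(\hat R)$ for all $\hat R\in\PN$. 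Putting $\hat R\in\mc P(W)$ gives $\beta(\hat R)=\deo_V(\hat R)\geq\hat R$; then for $\hat R\in\PV$ the projection $\hat Q:=\deo_W(\hat R)\in\mc P(W)$ satisfies $\hat Q\geq\hat R$ and $\beta(\hat Q)=\deo_V(\hat R)=\hat R$, so the previous inequality $\beta(\hat Q)\geq\hat Q$ squeezes $\hat Q=\hat R$; hence $\hat R\in\mc P(W)$. This gives $\PV\subseteq\mc P(W)$ together with $\beta|_{\PV}=\id$, and a short argument using injectivity of $\beta$ upgrades this to $\PV=\mc P(W)$. Since an abelian von Neumann algebra is generated by its projections, this forces $W=V$, i.e. $\ga=\id$. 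I regard the monotonicity-and-bijection bookkeeping here as the crux; note that it invokes neither the exclusion of $\bbC^2$ nor the absence of a type $I_2$ summand, as is needed for the general first part.

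With $\ga=\id$ established, the rest is routine: $\ptio=\Id$ now reads $\io_V(\ps S_V)=\ps S_V$ for every clopen subobject $\ps S$, and since every clopen subset of $\SigN_V$ occurs as such a component---take $\ps\deo(\hat P)$ with $\hat P\in\PV$, whose $V$-component is $S_{\hat P}$---the homeomorphism $\io_V$ fixes every clopen subset of $\SigN_V$ setwise. As $\SigN_V$ is the extremally disconnected, compact Hausdorff Gelfand spectrum of an abelian von Neumann algebra, its clopen sets separate points, so $\io_V=\id$; hence $\io=\id$ and $\pio=\Id$, which proves injectivity.

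For the second part I would argue purely formally. Under the stated hypotheses, Theorem \ref{Thm_GroupIsos} supplies a contravariant group isomorphism $\Aut_{Jordan}(\cN)\to\Aut(\SigN)$, $T\mapsto\pio$. Composing it with the contravariant injective homomorphism $\pio\mapsto\ptio^{-1}$ from the first part yields an injective group homomorphism $\Aut_{Jordan}(\cN)\to\Aut_{biHeyt}(\Subcl\SigN)$, which is now covariant because the two order-reversals cancel. This is the asserted map.
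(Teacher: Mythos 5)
Your proposal is correct, and for everything the paper actually writes down it follows the same route: the paper presents this proposition as a summary of the immediately preceding discussion (each $\pio$ induces an order-automorphism $\ptio$ of $\Subcl\SigN$, order-automorphisms preserve the bi-Heyting structure, the assignment $\pio\mapsto\ptio^{-1}$ is contravariant, and the second statement follows by composing with the contravariant isomorphism of Thm.~\ref{Thm_GroupIsos}), closing with ``so we have shown'' and giving no separate proof. Where you genuinely go beyond the paper is injectivity of the first map, which the paper asserts but never argues. Your argument is sound: from $\ptio=\Id$ you read off $\beta(\deo_W(\hat R))=\deo_V(\hat R)$ for the Boolean isomorphism $\beta=\alpha_V^{-1}\circ\io_V\circ\alpha_W$ with $W=\ga(V)$; the case $\hat R\in\mc P(W)$ gives $\beta\geq\id$ on $\mc P(W)$, the case $\hat R\in\PV$ then squeezes $\deo_W(\hat R)=\hat R$, so $\PV\subseteq\mc P(W)$ with $\beta$ the identity there, and bijectivity of $\beta$ forces $\mc P(W)=\PV$, hence $W=V$ since an abelian von Neumann algebra is generated by its projections; finally clopen sets separate points of the Stone space $\SigN_V$, so $\io_V=\id$. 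This has the additional virtue of using neither Thm.~\ref{Thm_GroupIsos} nor the exclusion of $\bbC^2$ and type $I_2$ summands, so it supports the unrestricted first statement exactly as claimed --- something the paper's surrounding text, which assumes those restrictions ``from now on'', does not visibly do. In short, your proof fills a real gap that the paper leaves implicit.
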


\begin{corollary}
Let $\cN$ be a von Neumann algebra not isomorphic to $\bbC^2$ and with no type $I_2$ summand. Let $\UN$ be the unitary group of $\cN$, let $\UN_0$ be the intersection of $\UN$ with the center of $\cN$, and let $\UN_{\on{proper}}:=\UN/\UN_0.$  There is an injective group homomorphism from $\UN_{\on{proper}}$ into the opposite of the automorphism group of the complete bi-Heyting algebra $\Subcl\SigN$.
\end{corollary}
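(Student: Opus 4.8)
The plan is to obtain the asserted homomorphism as a composite of two injective group homomorphisms that are already in hand: the faithful (anti-)representation of the unitary group on $\SigN$ from Cor.~\ref{Cor_FaithfulAntiRepOfUA}, and the (anti-)representation of $\Aut(\SigN)$ on the lattice of clopen subobjects from Prop.~\ref{Prop_InjGroupHomomAut(SigN)ToAut(SubclSigN)}. So essentially nothing new has to be constructed; the work is in assembling existing pieces and checking that injectivity and the group-theoretic variance come out as stated.

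First I would recall, as noted at the start of subsection~\ref{Subsec_FlowsOnSig}, that the von Neumann analogue of Cor.~\ref{Cor_FaithfulAntiRepOfUA} holds. Conjugation $\hU\mapsto\phi_{\hU}$ is a group homomorphism $\UN\to\Aut(\cN)$, since a direct check gives $\phi_{\hU}\circ\phi_{\hV}=\phi_{\hU\hV}$; its kernel is exactly $\UN_0=\UN\cap\mc Z(\cN)$, because $\phi_{\hU}=\id_{\cN}$ precisely when $\hU$ commutes with every element of $\cN$. Hence the assignment descends to an injective homomorphism $\UN_{\on{proper}}\to\Aut(\cN)$, and composing with the injective homomorphism $\Aut(\cN)\to\Aut(\SigN)^{\op}$ of (the von Neumann analogue of) Prop.~\ref{Prop_RepOfAutcNInAutSigN} yields the injective homomorphism $\UN_{\on{proper}}\to\Aut(\SigN)^{\op}$, $[\hU]\mapsto\pphi_{\hU}$. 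Second, I would invoke Prop.~\ref{Prop_InjGroupHomomAut(SigN)ToAut(SubclSigN)}, which (using that $\cN$ is not isomorphic to $\bbC^2$ and has no type $I_2$ summand) supplies an injective homomorphism $\Aut(\SigN)\to\Aut_{biHeyt}(\Subcl\SigN)^{\op}$, $\pio\mapsto\ptio^{-1}$. Chaining these two sends $[\hU]$ to $\ptio_{\hU}^{-1}$, where $\pio_{\hU}=\pphi_{\hU}$, and injectivity is immediate since a composite of injective maps is injective.

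The one genuinely delicate point, and the step I would be most careful about, is the bookkeeping of the opposite-group structures. Each constituent map is a homomorphism into an opposite group, i.e.\ contravariant, and a composite of two contravariant homomorphisms is covariant. I would therefore trace the three ingredients through their definitions in the correct order, namely $\hU\mapsto\phi_{\hU}$ (covariant), $\phi\mapsto\pphi$ (contravariant), and $\pio\mapsto\ptio^{-1}$ (contravariant), and verify explicitly whether $\ptio_{\hU\hV}^{-1}$ equals $\ptio_{\hU}^{-1}\circ\ptio_{\hV}^{-1}$ or $\ptio_{\hV}^{-1}\circ\ptio_{\hU}^{-1}$. This pins down in which group the composite is a genuine homomorphism and hence where the ``$\op$'' of the statement must sit; it is a routine but error-prone calculation, and it is the only place where the argument is more than a formal concatenation of earlier results.
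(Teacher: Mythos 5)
Your proposal is correct and matches the paper's own proof in essence: the paper likewise just composes $[\hU]\mapsto\phi_{\hU}$ with Prop.~\ref{Prop_InjGroupHomomAut(SigN)ToAut(SubclSigN)} (it views $\UN_{\on{proper}}$ as a subgroup of $\Aut_{Jordan}(\cN)$ and invokes the second, covariant statement, whereas you route through $\Aut(\SigN)^{\op}$ and the first statement, but these are the same composite). On the variance point you flag: the composite of the two contravariant maps is indeed covariant, so strictly it lands in $\Aut_{biHeyt}(\Subcl\SigN)$ rather than its opposite --- the paper's own proof has exactly the same feature, and since an injective homomorphism into a group yields one into its opposite by postcomposing with inversion, the statement is unaffected.
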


\begin{proof}
Every $[\hU]\in\UN_{\on{proper}}$ induces an inner algebra automorphism $\phi_{\hU}:\cN\ra\cN$, where $\hU$ is a representative of $[\hU]$, and the map $[\hU]\ra\phi_{\hU}$ is injective. $\phi_{\hU}$ is an algebra automorphism and hence gives a Jordan $*$-automorphism, so $\UN_{\on{proper}}$ can be considered as a subgroup of $\Aut_{Jordan}(\cN)$. Then Prop. \ref{Prop_InjGroupHomomAut(SigN)ToAut(SubclSigN)}, second statement, applies.
\end{proof}

\begin{lemma}			\label{Lem_RotatingProjsInSubobjs}
Let $\cN$ be a von Neumann algebra with no type $I_2$ summand, and let $\SigN$ be its spectral presheaf. Let $\pio = \pair{\Ga}{\io}$ be an automorphism of $\SigN$ with underlying base map $\ga:\VN\ra\VN$, and let $\ptio^{-1}:\Subcl\SigN\ra\Subcl\SigN$ be the induced automorphism of $\Subcl\SigN$ (with base map $\ga^{-1}$). For $\ps S\in\Subcl\SigN$, let
\begin{align}
			&\hP_{\ptio^{-1}(\ps S)_V} := \alpha_V^{-1}(\ptio^{-1}(\ps S)_V),\\
			&\hP_{\ps S_{\ga^{-1}(V)}}:=\alpha_{\ga(V)}^{-1}(\ps S_{\ga^{-1}(V)})=\alpha_{\ga(V)}^{-1}(((\Ga^{-1})^*(\ps S))_V).
\end{align}
Then, for all $V\in\VN$,
\begin{equation}			\label{Eq_ProjsOfSubobjsUnderAutoms}
			\hP_{\ptio^{-1}(\ps S)_V} = T|_{\ga^{-1}(V)}(\hP_{\ps S_{\ga^{-1}(V)}}),
\end{equation}
where $T:\cN\ra\cN$ is the Jordan $*$-automorphism of $\cN$ induced by $\pio = \pair{\Ga}{\io}$ (cf. Thm. \ref{Thm_GroupIsos}).
\end{lemma}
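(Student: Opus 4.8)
The plan is to reduce the identity \eqref{Eq_ProjsOfSubobjsUnderAutoms} to a statement about clopen subsets of a single Gelfand spectrum and then verify it by tracking characters. Both sides of \eqref{Eq_ProjsOfSubobjsUnderAutoms} are projections in $V$: the left-hand side by construction, and the right-hand side because the Jordan $*$-automorphism $T|_{\ga^{-1}(V)}$ sends projections of $\ga^{-1}(V)$ to projections of $\ga(\ga^{-1}(V))=V$. Since $\alpha_V$ is a bijection between $\PV$ and $\Cl(\SigN_V)$, proving \eqref{Eq_ProjsOfSubobjsUnderAutoms} is equivalent to proving the equality of clopen subsets
\begin{equation}
			\ptio^{-1}(\ps S)_V = \alpha_V\big(T|_{\ga^{-1}(V)}(\hP_{\ps S_{\ga^{-1}(V)}})\big).
\end{equation}

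First I would unwind the left-hand side. By definition of $\ptio^{-1}$, its component at $V$ is $\ptio^{-1}(\ps S)_V = \io^{-1}_V(((\Ga^{-1})^*(\ps S))_V) = \io^{-1}_V(\ps S_{\ga^{-1}(V)})$, the forward image of the clopen set $\ps S_{\ga^{-1}(V)}\subseteq\SigN_{\ga^{-1}(V)}$ under the homeomorphism $\io^{-1}_V$. By \eqref{Eq_ComponentOfeta-1}, $\io^{-1}_V$ is precomposition with $T^{-1}|_V:V\ra\ga^{-1}(V)$, i.e. it is the Gelfand dual of the restriction $T^{-1}|_V$. Here I would record that the restriction of a Jordan $*$-automorphism to an abelian context is an ordinary $*$-isomorphism, since on a commutative algebra the Jordan product coincides with the algebra product; hence $T^{-1}|_V:V\ra\ga^{-1}(V)$ and $T|_{\ga^{-1}(V)}:\ga^{-1}(V)\ra V$ are genuine, and mutually inverse, $*$-isomorphisms, so that Gelfand duality applies.

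The heart of the argument is then a short character computation. A character $\mu\in\SigN_V$ lies in $\io^{-1}_V(\ps S_{\ga^{-1}(V)})$ if and only if $\mu=\ld\circ T^{-1}|_V$ for some $\ld\in\ps S_{\ga^{-1}(V)}$; since $\io^{-1}_V$ is a bijection, this unique preimage is $\ld=\mu\circ T|_{\ga^{-1}(V)}$. Using the definition \eqref{Def_alphaV} of $\alpha_{\ga^{-1}(V)}$, the condition $\ld\in\ps S_{\ga^{-1}(V)}$ reads $\ld(\hP_{\ps S_{\ga^{-1}(V)}})=1$, which becomes $(\mu\circ T|_{\ga^{-1}(V)})(\hP_{\ps S_{\ga^{-1}(V)}})=\mu(T|_{\ga^{-1}(V)}(\hP_{\ps S_{\ga^{-1}(V)}}))=1$. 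By \eqref{Def_alphaV} applied in $V$, this says exactly that $\mu\in\alpha_V(T|_{\ga^{-1}(V)}(\hP_{\ps S_{\ga^{-1}(V)}}))$. As $\mu$ was arbitrary, this gives the displayed equality of clopen subsets, and applying $\alpha_V^{-1}$ yields \eqref{Eq_ProjsOfSubobjsUnderAutoms}.

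I expect no serious obstacle: the content is entirely a matter of carefully bookkeeping the directions of the maps, in particular keeping straight that $T^{-1}|_V$ goes from $V$ to $\ga^{-1}(V)$ while $T|_{\ga^{-1}(V)}$ is its inverse, going from $\ga^{-1}(V)$ to $V$. The hypothesis that $\cN$ has no type $I_2$ summand enters only through Thm.~\ref{Thm_GroupIsos}, which guarantees that $\pio$ is induced by a genuine Jordan $*$-automorphism $T$ of $\cN$ whose restrictions to contexts furnish the isomorphisms above; once $T$ is available, everything reduces to the evaluation identity $(\mu\circ\theta)(\hP)=\mu(\theta(\hP))$ for a $*$-isomorphism $\theta$.
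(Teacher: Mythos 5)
Your argument is correct and is essentially the paper's own proof: both reduce the claim to the character computation that $\io^{-1}_V$ is precomposition with $T^{-1}|_V$ (whose inverse is $T|_{\ga^{-1}(V)}$), so that $\ld(\hP_{\ps S_{\ga^{-1}(V)}})=1$ if and only if $\io^{-1}_V(\ld)\bigl(T|_{\ga^{-1}(V)}(\hP_{\ps S_{\ga^{-1}(V)}})\bigr)=1$, forcing the two projections to coincide under $\alpha_V$. The only cosmetic difference is that you track characters $\mu\in\SigN_V$ and pull back, while the paper tracks $\ld\in\SigN_{\ga^{-1}(V)}$ and pushes forward; your added remark that restrictions of Jordan $*$-automorphisms to abelian contexts are genuine $*$-isomorphisms is a point the paper leaves implicit.
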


\begin{proof}
Let $T:\cN\ra\cN$ be the Jordan $*$-automorphism of $\cN$ corresponding to the automorphism $\pio:\SigN\ra\SigN$, and let $T^{-1}$ be the inverse Jordan $*$-automorphism corresponding to $\pio^{-1}$. Then, by eq. \eq{Eq_ComponentOfeta-1}, the homeomorphism $\io^{-1}_V:((\Ga^{-1})^*(\SigN))_V=\SigN_{\ga^{-1}(V)}\ra\SigN_V$ is given by $\io^{-1}_V(\ld)=\ld\circ T^{-1}|_V$ for all $\ld\in\SigN_{\ga(V)}$. 

Noting that the inverse of $T^{-1}|_V:V\ra\ga^{-1}(V)$ is $T|_{\ga^{-1}(V)}:\ga^{-1}(V)\ra V$, we have
\begin{align}
			\ld\in((\Ga^{-1})^*(\ps S))_V=\ps S_{\ga^{-1}(V)} &\Longleftrightarrow \ld(\hP_{\ps S_{\ga^{-1}(V)}}) = 1\\
			&\Longleftrightarrow (\ld\circ T^{-1}|_V\circ T|_{\ga{-1}(V)})(\hP_{\ps S_{\ga^{-1}(V)}})=1\\
			&\Longleftrightarrow (\ld\circ T^{-1}|_V)(T|_{\ga^{-1}(V)}(\hP_{\ps S_{\ga^{-1}(V)}}))=1\\
			&\Longleftrightarrow \io^{-1}_V(\ld)(T|_{\ga^{-1}(V)}(\hP_{\ps S_{\ga^{-1}(V)}}))=1,
\end{align}
and also
\begin{align}
			\ld\in((\Ga^{-1})^*(\ps S))_V &\Longleftrightarrow \io^{-1}_V(\ld)\in\io^{-1}_V(((\Ga^{-1})^*(\ps S))_V)\\
			&\Longleftrightarrow \io^{-1}_V(\ld)(\hP_{(\ptio^{-1}(\ps S))_V})=1,
\end{align}
so $\io^{-1}_V(\ld)(T|_{\ga^{-1}(V)}(\hP_{\ps S_{\ga^{-1}(V)}}))=1$ if and only if $\io^{-1}_V(\ld)(\hP_{(\ptio^{-1}(\ps S))_V})=1$, and hence
\begin{equation}
			\hP_{(\ptio^{-1}(\ps S))_V} = T|_{\ga^{-1}(V)}(\hP_{\ps S_{\ga^{-1}(V)}}).
\end{equation}
\end{proof}

This lemma shows that the projections corresponding to the components of the subobject $\ptio^{-1}(\ps S)$ are given by the projections corresponding to the components of the original subobject $\ps S$, `rotated' by $T$, where $T$ is the Jordan $*$-automorphism corresponding to $\pio$. Note that Jordan automorphisms correspond \emph{contra}variantly to automorphisms of the spectral presheaf by Thm. \ref{Thm_GroupIsos}. Hence, $T$ goes in the `opposite direction' relative to $\pio$, and hence in the `same direction' as $\ptio^{-1}:\Subcl\SigN\ra\Subcl\SigN$.

\begin{corollary}			\label{Cor_UnitaryActionOnProjs}
Let $\cN$ be a von Neumann algebra with no type $I_2$ summand, let $\hU\in\UN$ and $\phi_{\hU}:\cN\ra\cN$, $\hA\mt\hU\hA\hU^*$. If $\pphi_{\hU}:\SigN\ra\SigN$ is the automorphism induced by $\phi_{\hU}$ and $\ptphi_{\hU^*}:\Subcl\SigN\ra\Subcl\SigN$ is the corresponding automorphism of $\Subcl\SigN$ (induced by $\phi_{\hU}^{-1}=\phi_{\hU^*}$), then, for all $\ps S\in\Subcl\SigN$ and for all $V\in\VN$,
\begin{equation}
			\hP_{\ptphi_{\hU^*}(\ps S)_V} = \hU\hP_{\ps S_{\hU^* V\hU}}\hU^*,
\end{equation}
where the projections are given by $\hP_{\ptphi_{\hU^*}(\ps S)_V} = \alpha_V^{-1}(\ptphi_{\hU^*}(\ps S)_V)$ and $\hP_{\ps S_{\hU^* V\hU}} = \alpha_{\hU^* V\hU}^{-1}(\ps S_{\hU^* V\hU})$ (cf. \eq{Def_alphaV}).
\end{corollary}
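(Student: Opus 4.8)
The plan is to obtain this as a direct specialisation of Lemma \ref{Lem_RotatingProjsInSubobjs} to the inner automorphism $\phi_{\hU}$; once the base map and the associated Jordan $*$-automorphism have been identified, the two sides of \eq{Eq_ProjsOfSubobjsUnderAutoms} translate immediately into the asserted formula.

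First I would pin down the base map $\ga$ of $\pio:=\pphi_{\hU}$. By the construction in subsection \ref{Subsec_InnerAutoms} (transported from the base category $\CA$ to $\VN$), $\phi_{\hU}$ induces the order automorphism $\ga:\VN\ra\VN$, $V\mt\hU V\hU^*$, so that $\ga^{-1}(V)=\hU^* V\hU$. This already produces the context $\hU^* V\hU$ occurring in the statement.

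Second, and this is where I expect the only genuine care to be needed, I would identify the Jordan $*$-automorphism $T$ attached to $\pio=\pphi_{\hU}$ via Thm. \ref{Thm_GroupIsos}. Since $\phi_{\hU}$ is an algebra automorphism it is in particular a Jordan $*$-automorphism, and I claim $T=\phi_{\hU}$. To verify this I would compare the component formula of the natural isomorphism for $\pio^{-1}=\pphi_{\hU^*}$, namely $\cG_{\hU^*;V}:\ld\mt\ld\circ\phi_{\hU^*}|_V$ with $\phi_{\hU^*}|_V:V\ra\hU^* V\hU$, against the general description \eq{Eq_ComponentOfeta-1}, which reads $\io^{-1}_V:\ld\mt\ld\circ T^{-1}|_V$ with $T^{-1}|_V:V\ra\ga^{-1}(V)$. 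Matching these for every $V$ gives $T^{-1}|_V=\phi_{\hU^*}|_V$, hence $T^{-1}=\phi_{\hU^*}$ and $T=\phi_{\hU}$. The main obstacle is purely one of bookkeeping: the assignments $\phi\mt\pphi$, $\pio\mt T$ and $\pio\mt\ptio^{-1}$ are each contravariant, and one must track how their directions combine in order to be sure that $T$ is $\phi_{\hU}$ and not $\phi_{\hU^*}=\phi_{\hU}^{-1}$.

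Finally I would substitute into \eq{Eq_ProjsOfSubobjsUnderAutoms}. With $\ptphi_{\hU^*}=\ptio^{-1}$, the automorphism of $\Subcl\SigN$ induced by $\pio^{-1}=\pphi_{\hU^*}$, Lemma \ref{Lem_RotatingProjsInSubobjs} yields
\begin{equation}
			\hP_{\ptphi_{\hU^*}(\ps S)_V} = T|_{\ga^{-1}(V)}(\hP_{\ps S_{\ga^{-1}(V)}}) = \phi_{\hU}|_{\hU^* V\hU}(\hP_{\ps S_{\hU^* V\hU}}).
\end{equation}
As $\phi_{\hU}|_{\hU^* V\hU}:\hU^* V\hU\ra V$ is the restriction of $\phi_{\hU}$ and hence acts by $\hP\mt\hU\hP\hU^*$, the right-hand side equals $\hU\,\hP_{\ps S_{\hU^* V\hU}}\,\hU^*$, which is exactly the claimed identity.
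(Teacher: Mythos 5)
Your argument is correct and follows exactly the route the paper intends: the corollary is stated without proof as an immediate specialisation of Lemma \ref{Lem_RotatingProjsInSubobjs}, and your identification of the base map $\ga(V)=\hU V\hU^*$ and of the Jordan $*$-automorphism $T=\phi_{\hU}$ (via matching $\cG_{\hU^*;V}$ against \eq{Eq_ComponentOfeta-1}) is precisely the bookkeeping needed. Nothing to add.
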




\begin{definition}			\label{Def_FlowOnSubclSigN}
Let $F:\bbR\ra\Aut(\SigN)$ be a flow on the spectral presheaf of $\cN$. The \emph{flow $\tilde F^{-1}:\bbR\ra\Aut_{biHeyt}(\Subcl\SigN)$ on clopen subobjects corresponding to $F$} is the one-parameter group $(\tilde F^{-1}(t))_{t\in\bbR}:\Subcl\SigN\ra\Subcl\SigN$ of automorphisms of the complete bi-Heyting algebra $\Subcl\SigN$, given as follows: for all $t\in\bbR$, if $F(t)=\pio_t=\pair{\Ga_t}{\io_t}$, then
\begin{align}
			\tilde F^{-1}(t) = \ptio_t^{-1}:\Subcl\SigN &\lra \Subcl\SigN\\			\nonumber
			\ps S &\lmt \io_t^{-1}((\Ga_t^{-1})^*(\ps S)).
\end{align}
Let $(\hU_t)_{t\in\bbR}=(e^{it\hH})_{t\in\bbR}$ be the one-parameter group of unitaries in $\cN$ induced by a self-adjoint operator $\hH$ affiliated with $\cN$. We define the corresponding flow $(\tilde F^{-1}_{\hH}(t))_{t\in\bbR}$ on $\Subcl\SigN$, for all $t\in\bbR$, by
\begin{equation}
			\tilde F^{-1}_{\hH}(t) := \ptphi_{\hU_t}^{-1}:\Subcl\SigN \lra \Subcl\SigN
\end{equation}
such that $\tilde F^{-1}_{\hH}(\ps S) = \cG_{\hU_{t}^{-1}}((\Phi_{\hU_{t}^{-1}})^*(\ps S))$.
\end{definition}

If $F:\bbR\ra\Aut(\SigN)$, $t\ra\pio_t=\pair{\Ga_t}{\io_t}$ is a flow and $\tilde F^{-1}:\bbR\ra\Aut_{biHeyt}(\Subcl\SigN)$ is the corresponding flow on clopen subobjects, we have $\io_t^{-1}=\io_{-t}$ and $\Ga_t^{-1}=\Ga_{-t}$ and hence $\ptio_t^{-1}=\ptio_{-t}$ for all $t\in\bbR$, so
\begin{align}
			\tilde F^{-1}(t) = \ptio_{-t}:\Subcl\SigN\lra\Subcl\SigN.
\end{align}
If $F_{\hH}:\bbR\ra\Aut(\SigN)$ is the flow induced by a one-parameter group of unitaries, then $\hU_t^{-1}=\hU_{-t}$ for all $t\in\bbR$, so
\begin{align}			\label{Eq_FlowForwardOnClopenSubojs}
	\tilde F^{-1}_{\hH}(t) = \ptphi_{\hU_{-t}}:\Subcl\SigN\lra\Subcl\SigN
\end{align}
is the induced flow on clopen subobjects.


We now come back to propositions about the values of physical quantities, and how they change in time. Let ``$\Ain\de;t_0$'' be such a proposition, and let $\hP_{t_0}$ the corresponding projection in $\cN$ (via the spectral theorem). Without loss of generality, we can assume $t_0=0$. We saw in \eq{Def_OuterDasOfProj} that there is a clopen subobject $\ps\deo(\hP_0)$ of $\SigN$ representing the proposition ``$\Ain\de;0$''.

In standard quantum theory in the Heisenberg picture, time evolution is implemented by a (strongly continuous) one-parameter group $(\hU_t^*)_{t\in\bbR}$ of unitaries in $\cN$ acting on self-adjoint operators (representing physical quantities), and hence also acting on projections (representing propositions). For any given $t\in\bbR$, the proposition ``$\Ain\de;t$'' is represented by the projection operator $\hP_t:=\hU_{-t}\hP_0\hU_t$.\footnote{\label{FN_TimeEvolutionOfProjs}Note that $\hP_0$ is conjugated by $\hU_{-t}=\hU_t^*$ (and not by $\hU_t$). This is the usual convention in quantum theory, which we follow from now on.} This determines a clopen subobject $\ps\deo(\hP_t)=\ps\deo(\hU_{-t}\hP_0\hU_t)$. We will now show that the flow on $\Subcl\SigN$ associated with the one-parameter group $(\hU_{-t})_{t\in\bbR}$ actually maps $\ps\deo(\hP_0)$ to $\ps\deo(\hP_t)$, for each $t\in\bbR$.

\begin{proposition}
Let $\cN$ be a von Neumann algebra with no type $I_2$ summand, and let $(\hU_{-t})_{t\in\bbR}$ be a one-parameter group of unitaries in $\cN$, induced by a self-adjoint operator $\hH$ affiliated with $\cN$ such that $\hU_{-t}=e^{-it\hH}$. Let $(\tilde F_{\hH}^{-1}(t))_{t\in\bbR}$ be the flow on $\Subcl\SigN$ given by
\begin{equation}
			\tilde F_{\hH}^{-1}(t) = \ptphi_{\hU_{-t}}^{-1} = \ptphi_{\hU_{t}}
\end{equation}
for all $t\in\bbR$.\footnote{Note that this convention differs from eq. \eq{Eq_FlowForwardOnClopenSubojs} because we consider the one-parameter group $(\hU_{-t})_{t\in\bbR}$ rather than $(\hU_t)_{t\in\bbR}$, cf. footnote \ref{FN_TimeEvolutionOfProjs}.} Moreover, let ``$\Ain\de;0$'' be a proposition, represented by the projection $\hP_0\in\PN$, and let $\hP_t=\hU_{-t}\hP_0\hU_t$ be the projection in $\cN$ that represents the proposition ``$\Ain\de;t$'' at time $t$. Let $\ps\deo(\hP_0)\in\Subcl\SigN$ be the clopen subobject corresponding to $\hP_0$ by eq. \eq{Def_OuterDasOfProj}, and let $\ps\deo(\hP_t)$ be the clopen subobject corresponding to $\hP_t$. Then, for all $t\in\bbR$,
\begin{equation}
			\tilde F_{\hH}^{-1}(t)(\ps\deo(\hP_0)) = \ps\deo(\hP_t).
\end{equation}
\end{proposition}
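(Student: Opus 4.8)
The plan is to reduce the claim to the daseinisation identity $\deo_V(\hU^*\hP\hU) = \hU^*\deo_{\hU V\hU^*}(\hP)\hU$ and then feed this into the component-wise description of $\ptphi_{\hU_{-t}}$ supplied by Corollary \ref{Cor_UnitaryActionOnProjs}. Concretely, fix $t\in\bbR$ and write $\hU:=\hU_{-t}=e^{-it\hH}$, so that $\hP_t=\hU\hP_0\hU^*$ and the flow acts as $\tilde F_{\hH}^{-1}(t)=\ptphi_{\hU}^{-1}=\ptphi_{\hU^*}$ in the notation of Corollary \ref{Cor_UnitaryActionOnProjs} (with that corollary's $\hU$ being our $\hU^*$). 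Since both $\tilde F_{\hH}^{-1}(t)(\ps\deo(\hP_0))$ and $\ps\deo(\hP_t)$ are clopen subobjects of $\SigN$, and a clopen subobject is determined by its components, it suffices to check equality component-wise, i.e.\ to show $\hP_{\tilde F_{\hH}^{-1}(t)(\ps\deo(\hP_0))_V}=\hP_{\ps\deo(\hP_t)_V}$ for every $V\in\VN$, using the isomorphism $\alpha_V$ of \eqref{Def_alphaV} to pass back and forth between clopen subsets and projections.

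First I would compute the left-hand side. By Corollary \ref{Cor_UnitaryActionOnProjs}, applied with the unitary $\hU^*$ (so that $\phi_{\hU^*}=\phi_{\hU}^{-1}$ and the base map sends $V\mapsto \hU^*V\hU$ is replaced by $V\mapsto \hU V\hU^*$), the $V$-component projection of $\ptphi_{\hU^*}(\ps\deo(\hP_0))$ equals a conjugate of $\hP_{\ps\deo(\hP_0)_W}$ for the appropriate $W\in\VN$. Since the $W$-component of $\ps\deo(\hP_0)$ corresponds under $\alpha_W$ exactly to the projection $\deo_W(\hP_0)$ by \eqref{Def_OuterDasOfProj}, this produces an expression of the form $\hU\,\deo_{W}(\hP_0)\,\hU^*$ with $W=\hU^*V\hU$. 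For the right-hand side, the $V$-component of $\ps\deo(\hP_t)$ corresponds under $\alpha_V$ to $\deo_V(\hP_t)=\deo_V(\hU\hP_0\hU^*)$ directly from \eqref{Def_OuterDasOfProj}. So the whole proposition collapses to the single operator identity
\begin{equation}
			\hU\,\deo_{\hU^*V\hU}(\hP_0)\,\hU^* = \deo_V(\hU\hP_0\hU^*).
\end{equation}

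The heart of the argument is therefore establishing this covariance of outer daseinisation under inner automorphisms, and I expect this to be the main (though not deep) obstacle, since everything else is bookkeeping with $\alpha_V$ and the definitions. To prove it I would unfold the definition \eqref{Def_OuterDasOfProjs}: $\deo_V(\hU\hP_0\hU^*)=\bmeet\{\hQ\in\PV\mid \hQ\geq \hU\hP_0\hU^*\}$. Conjugation by $\hU^*$ is an order-isomorphism of the projection lattice that maps $\PV$ bijectively onto $\mc P(\hU^*V\hU)$ and preserves arbitrary meets, and the condition $\hQ\geq \hU\hP_0\hU^*$ is equivalent to $\hU^*\hQ\hU\geq\hP_0$. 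Hence the set over which the meet is taken is carried exactly onto $\{\hQ'\in\mc P(\hU^*V\hU)\mid \hQ'\geq\hP_0\}$, so $\hU^*\deo_V(\hU\hP_0\hU^*)\hU=\deo_{\hU^*V\hU}(\hP_0)$, which rearranges to the displayed identity. I would then reassemble the component-wise equalities into an equality of subobjects, invoking that both sides are genuine clopen subobjects of $\SigN$ (the daseinisation subobjects by \eqref{Def_OuterDasOfProj}, the flow image by Definition \ref{Def_FlowOnSubclSigN} together with the discussion preceding Proposition \ref{Prop_InjGroupHomomAut(SigN)ToAut(SubclSigN)}), concluding $\tilde F_{\hH}^{-1}(t)(\ps\deo(\hP_0))=\ps\deo(\hP_t)$ for all $t\in\bbR$.
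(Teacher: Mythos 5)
Your proposal is correct and follows essentially the same route as the paper: reduce to a component-wise identity of projections via Corollary \ref{Cor_UnitaryActionOnProjs}, then verify the covariance $\hU\,\deo_{\hU^*V\hU}(\hP_0)\,\hU^*=\deo_V(\hU\hP_0\hU^*)$ by unfolding \eq{Def_OuterDasOfProjs}, using that conjugation is a meet-preserving order-isomorphism carrying $\PV$ onto $\mc P(\hU^*V\hU)$. (Your parenthetical about which unitary to substitute into the corollary is slightly garbled, but the displayed formula you actually use, with base map $V\mapsto\hU^*V\hU$ and outer conjugation by $\hU=\hU_{-t}$, is the right one and coincides with the paper's computation.)
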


\begin{proof}
For all $V\in\VN$, we have
\begin{equation}
			\hP_{(\tilde F_{\hH}^{-1}(t)(\ps\deo(\hP_0)))_V} 
			= \hP_{\ptphi_{\hU_{t}}(\ps\deo(\hP_0))_V}
			\stackrel{\text{Cor. }\ref{Cor_UnitaryActionOnProjs}}{=} \hU_{-t}\hP_{\ps\deo(\hP_0)_{\hU_t V\hU_{-t}}}\hU_t.
\end{equation}
Moreover, for all $V\in\VN$,
\begin{align}
			&\hU_{-t}\hP_{\ps\deo(\hP_0)_{\hU_t V\hU_{-t}}}\hU_t\nonumber\\
			 \stackrel{\eq{Def_OuterDasOfProjs}}{=}\;&\hU_{-t}\bmeet\{\hU_t\hQ\hU_{-t}\in\mc P(\hU_t V\hU_{-t}) \mid \hU_t\hQ\hU_{-t}\geq\hP_0)\}\hU_t\\
			=\;&\bmeet\{\hQ\in\PV \mid \hU_t\hQ\hU_{-t}\geq\hP_0\}\\
			=\;&\bmeet\{\hQ\in\PV \mid \hQ\geq\hU_{-t}\hP_0\hU_t\}\\
			\stackrel{\eq{Def_OuterDasOfProjs}}{=}\;&\deo(\hU_{-t}\hP_0\hU_t)_V\\
			=\;&\hP_{(\ps\deo(\hU_{-t}\hP_0\hU_t))_V},
\end{align}
so $\hP_{(\tilde F_{\hH}^{-1}(t)(\ps\deo(\hP_0)))_V} = \hP_{(\ps\deo(\hU_{-t}\hP_0\hU_t))_V}$ for all $V\in\VN$, and hence
\begin{equation}
			\tilde F^{-1}(t)(\ps\deo(\hP_0)) = \ps\deo(\hU_{-t}\hP_0\hU_t)=\ps\deo(\hP_t).
\end{equation}
\end{proof}

More generally, if $\ps S_0\in\Subcl\SigN$ is a clopen subobject of the spectral presheaf that represents some proposition about the quantum system at time $t_0=0$, then time evolution (for time $t$) in the Heisenberg picture maps $\ps S_0$ to
\begin{equation}			\label{Def_S_tHeisenberg}
			\ps S_t := \tilde F^{-1}(t)(\ps S_0) = \ptphi_{\hU_t}(\ps S_0).
\end{equation}

\subsection{States as probability measures and Schr\"odinger picture}			\label{Subsec_SchroedingerPic}
Quantum states are identified with states of the von Neumann algebra $\cN$. In order to describe time evolution in the Schr\"odinger picture, we need a representation of states as structures related to the spectral presheaf.

\begin{definition}			\label{Def_ProbabMeasOnClopenSubobjs}
(\cite{Doe09a}) Let $\cN$ be a von Neumann algebra with context category $\VN$, let $\Subcl\SigN$ be the complete bi-Heyting algebra of clopen subobjects of $\SigN$, and let $A(\VN,[0,1])$ be the set of antitone maps from the poset $\VN$ to the unit interval. A map
\begin{equation}
			\mu:\Subcl\SigN \lra A(\VN,[0,1])
\end{equation}
is called a \emph{probability measure on (the clopen subobjects of) the spectral presheaf $\SigN$} if
\begin{itemize}
	\item [1.] $\mu(\Sig)=1_{\VN}$, the constant function on $\VN$ with value $1$,
	\item [2.] for all $\ps S,\ps T\in\Subcl\SigN$,
	\begin{equation}
				\mu(\ps S)+\mu(\ps T)=\mu(\ps S\join\ps T)+\mu(\ps S\meet\ps T),
	\end{equation}
	where meets, joins and addition are defined stagewise, at each $V\in\VN$ locally (e.g. $(\mu(\ps S)+\mu(\ps T))(V)=\mu(\ps S)(V)+\mu(\ps T)(V)$).\footnote{In \cite{Doe09a}, the codomain of a measure was given equivalently as the set $\Ga\ps{[0,1]^\succeq}$ of global sections of a certain presheaf.}
\end{itemize}
The convex set of probability measures on $\SigN$ is denoted $\mc M(\SigN)$.
\end{definition}

The following result, which was proven in \cite{Doe09a}, shows that probability measures on $\SigN$ in the sense defined above indeed correspond to states of the von Neumann algebra $\cN$:

\begin{theorem}			\label{Thm_IsomS(N)AndM(SigM)}
Let $\cN$ be a von Neumann algebra with no type $I_2$ summand. There is an isomorphism of convex sets between $\mc S(\cN)$, the convex set of states on $\cN$, and $\mc M(\SigN)$, the convex set of probability measures on the spectral presheaf of $\cN$.
\end{theorem}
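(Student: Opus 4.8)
The plan is to exhibit two mutually inverse affine maps between $\mc S(\cN)$ and $\mc M(\SigN)$. The forward map sends a state to the family of its values on the local projection lattices $\PV$, read off through the isomorphisms $\alpha_V\colon\PV\to\Cl(\SigN_V)$; the backward map reconstructs a state from a measure by assembling local finitely additive measures into a single finitely additive measure on $\PN$ and then invoking the generalised version of Gleason's theorem. The hypothesis that $\cN$ has no type $I_2$ summand enters only in this last step, and is exactly the hypothesis under which the generalised Gleason theorem holds.

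\emph{Forward map.} Given $\rho\in\mc S(\cN)$, define $\mu_\rho\colon\Subcl\SigN\to A(\VN,[0,1])$ by $\mu_\rho(\ps S)(V):=\rho(\hP_{\ps S_V})$, where $\hP_{\ps S_V}=\alpha_V^{-1}(\ps S_V)\in\PV$. I would verify the three required properties stagewise. Antitonicity: for $V'\subseteq V$ the restriction maps of the clopen subobject give $\hP_{\ps S_{V'}}=\deo_{V'}(\hP_{\ps S_V})\geq\hP_{\ps S_V}$ (the equality of components already recorded for clopen subobjects), so positivity and monotonicity of $\rho$ on projections yield $\mu_\rho(\ps S)(V')\geq\mu_\rho(\ps S)(V)$. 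Normalisation: $\ps\deo(\hat 1)=\Sig$ and $\hP_{\Sig_V}=\hat 1$, so $\mu_\rho(\Sig)(V)=\rho(\hat 1)=1$. Modularity is the only point with content, and it reduces to the classical case: since finite meets and joins of clopen subobjects are computed stagewise, $\hP_{(\ps S\meet\ps T)_V}=\hP_{\ps S_V}\meet\hP_{\ps T_V}$ and $\hP_{(\ps S\join\ps T)_V}=\hP_{\ps S_V}\join\hP_{\ps T_V}$; the projections $\hP_{\ps S_V},\hP_{\ps T_V}$ commute inside the abelian algebra $V$, so $\rho(\hP_{\ps S_V}\meet\hP_{\ps T_V})+\rho(\hP_{\ps S_V}\join\hP_{\ps T_V})=\rho(\hP_{\ps S_V})+\rho(\hP_{\ps T_V})$, which is exactly the modularity axiom evaluated at $V$. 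The assignment $\rho\mapsto\mu_\rho$ is affine because $\rho\mapsto\rho(\hP)$ is.

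\emph{Backward map.} Given $\mu\in\mc M(\SigN)$, for each $V$ and each $\hQ\in\PV$ set $m_V(\hQ):=\mu(\ps\deo(\hQ))(V)$, noting $\ps\deo(\hQ)_V=S_{\hQ}$ because $\deo_V(\hQ)=\hQ$ for $\hQ\in\PV$. I would first check that each $m_V$ is a finitely additive probability measure on the Boolean algebra $\PV$: normalisation is $m_V(\hat 1)=\mu(\Sig)(V)=1$, and for orthogonal $\hQ_1,\hQ_2\in\PV$ I would use that outer daseinisation preserves joins, whence $\ps\deo(\hQ_1)\join\ps\deo(\hQ_2)=\ps\deo(\hQ_1\join\hQ_2)$ as subobjects, together with the modularity axiom evaluated at $V$, where the meet term $\ps\deo(\hQ_1)\meet\ps\deo(\hQ_2)$ has empty $V$-component. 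Next I would show these local measures cohere: if $\hQ\in\PV\cap\mc P(V')$ then $m_V(\hQ)=m_{V'}(\hQ)$, so that $m(\hQ):=m_V(\hQ)$ (for any $V\ni\hQ$) is a well-defined finitely additive probability measure on the global lattice $\PN$, finite additivity for an orthogonal family following by passing to a single context containing all the projections involved. Finally, since $\cN$ has no type $I_2$ summand, the generalised Gleason theorem (Christensen--Yeadon) provides a unique state $\rho_\mu\in\mc S(\cN)$ with $\rho_\mu|_{\PN}=m$.

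\emph{Inverse, affineness, and main obstacle.} The two maps are mutually inverse: starting from $\rho$ one computes $m_V(\hQ)=\mu_\rho(\ps\deo(\hQ))(V)=\rho(\deo_V(\hQ))=\rho(\hQ)$, so $m=\rho|_{\PN}$, and uniqueness of the Gleason extension (states on a von Neumann algebra are determined by their values on projections, by the spectral theorem and norm-continuity) returns $\rho$; starting from $\mu$ one has $\mu_{\rho_\mu}(\ps S)(V)=\rho_\mu(\hP_{\ps S_V})=m(\hP_{\ps S_V})=\mu(\ps S)(V)$. The backward map is affine since $\mu\mapsto m$ is stagewise linear and the Gleason extension is unique, hence affine, so together with the first part we obtain an isomorphism of convex sets. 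The genuinely hard point that makes the whole scheme work is a \emph{locality} property in the backward direction: that $\mu(\ps R)(V)$ depends only on the component $\ps R_V$, equivalently that $\mu(\ps R)(V)=0$ whenever $\ps R_V=\emptyset$. This is what makes each $m_V$ additive on $\PV$ (it kills the meet term above) and what secures the round-trip identity $\mu_{\rho_\mu}=\mu$; it must be extracted from the modularity axiom, the antitone behaviour of $\mu$, and the stagewise computation of meets and joins. Granting locality, the only other substantial ingredient is the generalised Gleason theorem, whose availability is precisely what the exclusion of type $I_2$ summands guarantees.
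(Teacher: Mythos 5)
Your route is in substance the same as the paper's: the paper factors the isomorphism through the set $\Ga\CP$ of global sections of the presheaf $\CP$ of local classical measures (Prop.~\ref{Prop_IsomS(N)AndGaCP} composed with Lemma~\ref{Lem_GaCPAndM(SigN)}), whereas you compose the two steps into one pair of direct maps; in both cases the engine is the family of local FAPMs $\rho|_V\circ\alpha_V^{-1}$ on $\Cl(\SigN_V)$, their coherence under restriction, and the generalised Gleason theorem for the backward direction. A minor imprecision: for a general clopen subobject one only has $\hP_{\ps S_{V'}}\geq\deo_{V'}(\hP_{\ps S_V})\geq\hP_{\ps S_V}$, not the equality you assert (equality of components holds for daseinised subobjects $\ps\deo(\hP)$, not for arbitrary $\ps S\in\Subcl\SigN$); your antitonicity conclusion survives since only the inequality is needed.

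The one substantive issue is the locality property you correctly isolate: that $\mu(\ps S)(V)$ depends only on the component $\ps S_V$ (equivalently $\mu(\ps R)(V)=0$ when $\ps R_V=\emptyset$). You say it ``must be extracted from the modularity axiom, the antitone behaviour of $\mu$, and the stagewise computation of meets and joins,'' but it cannot be: the constant map $\mu(\ps S)=1_{\VN}$ for all $\ps S$ satisfies both conditions of Def.~\ref{Def_ProbabMeasOnClopenSubobjs} and lands in $A(\VN,[0,1])$, yet is not local and corresponds to no state. Locality is therefore an additional input, not a consequence; in the original source \cite{Doe09a} it is built into the definition of a measure (the value at $V$ is by definition a function of $\ps S_V$ alone), and the paper's own Lemma~\ref{Lem_GaCPAndM(SigN)} uses it silently when it sets $\mu_V(S):=\mu(\ps S)(V)$ for ``a'' clopen subobject $\ps S$ with $\ps S_V=S$ without checking independence of the choice. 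So your diagnosis of where the difficulty sits is exactly right, but the proposed resolution (deriving locality from the stated axioms) would fail; the correct fix is to take locality as part of the definition, after which your argument goes through and coincides with the paper's.
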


The proof idea is to show that each probability measure $\mu:\Subcl\SigN\ra\cA(\VN,[0,1])$ determines a unique finitely additive probability measure $m:\PN\ra [0,1]$ on the projections in the von Neumann algebra $\cN$. By the generalised version of Gleason's theorem (see \cite{Mae90} and references therein), such a map $m$ induces a unique state $\rho_\mu$ of $\cN$. Conversely, every state $\rho$ gives a probability measure $\mu_\rho$ on $\SigN$. Normal states were characterised in \cite{Doe09a} by a local property, and by a regularity condition in \cite{DoeIsh12}: normal states correspond bijectively to those probability measures $\mu:\Subcl\SigN\ra A(\VN,[0,1])$ that preserve joins of increasing families of clopen subobjects. (If the von Neumann algebra $\cN$ can be represented faithfully on a separable Hilbert space, then preservation of countable joins is sufficient.)

We will now give another, equivalent characterisation of states as probability measures on the spectral presheaf $\SigN$. This description makes it easier to define the action of one-parameter groups of unitaries on the set of measures, which physically corresponds to time evolution in the Schr\"odinger picture of the quantum system described by $\cN$.

Let $\rho\in\mc S(\cN)$ be a state of $\cN$. For each abelian von Neumann subalgebra $V\in\VN$, we obtain a finitely additive probability measure (\emph{FAPM})
\begin{align}
			\rho|_V:\PV &\lra [0,1]\\			\nonumber
			\hP &\lmt \rho(\hP)
\end{align}
on the projections in $V$, that is,
\begin{equation}
			\forall \hP,\hQ\in\PV: \hP\hQ=\hat 0 \Longleftrightarrow \rho|_V(\hP+\hQ)=\rho|_V(\hP)+\rho|_V(\hQ)
\end{equation}
and $\rho|_V(\hat 1)=1$. Using the isomorphism
\begin{align}
			\alpha_V:\PV &\lra \Cl(\SigN_V)\\			\nonumber
			\hP &\lmt \{\ld\in\SigN_V \mid \ld(\hP)=1\}
\end{align}
between the complete Boolean algebras of projections in $V$ and clopen subsets of the Gelfand spectrum of $V$, we can think of $\rho|_V$ as a FAPM
\begin{equation}
			\rho|_V\circ\alpha_V^{-1}:\Cl(\SigN_V) \lra [0,1]
\end{equation}
on clopen subsets of the Gelfand spectrum $\SigN_V$ of $V$. Note that $\Cl(\SigN_V)=(\Subcl\SigN)_V$ for all $V\in\VN$.

\begin{definition}
Let $\cN$ be a von Neumann algebra, and let $\VN$ be its context category. The presheaf $\CP$ of classical probability measures on $\SigN$ is given
\begin{itemize}
	\item [(a)] on objects: for all $V\in\VN$,
	\begin{equation}
				\CP_V:=\{m_V:\Cl(\SigN_V)\ra [0,1] \mid m_V\text{ is a FAPM}\},
	\end{equation}
	that is, $m_V(\SigN_V)=1$ and, for all $S_1,S_2\in\Cl(\SigN_V)$,
	\begin{align}			\label{Eq_FAPM}
					m_V(S_1)+m_V(S_2) = m_V(S_1\cup S_2)+m_V(S_1\cap S_2)
	\end{align}
	for all $m_V\in\CP_V$.
	\item [(b)] on arrows: for all inclusions $i_{V'V}$,
	\begin{align}
				\CP(i_{V'V}):\CP_V &\lra \CP_{V'}\\			\nonumber
				m_V &\lmt m_V\circ\Sig(i_{V'V})^{-1},
	\end{align}
	that is, $\CP(i_{V'V})(m_V)$ is the pushforward of the measure of $m_V$ along $\Sig(i_{V'V})$.
\end{itemize}
\end{definition}
Here, we use the fact that the restriction map $\SigN(i_{V'V})$ is continuous and hence measurable (i.e, $\SigN(i_{V'V})^{-1}$ takes measurable subsets of $\SigN_{V'}$ to measurable subsets of $\SigN_V$).

\begin{proposition}			\label{Prop_IsomS(N)AndGaCP}
Let $\cN$ be a von Neumann algebra with no type $I_2$ summand. There is an isomorphism of convex sets between $\mc S(\cN)$, the set of states of $\cN$, and the set $\Ga\CP$ of global sections of the presheaf $\CP$.
\end{proposition}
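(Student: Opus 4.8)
The plan is to factor the desired isomorphism through finitely additive probability measures (\emph{FAPMs}) on the projection lattice $\PN$, exactly as in the proof of Thm.~\ref{Thm_IsomS(N)AndM(SigM)}, and to identify global sections of $\CP$ with such measures. Concretely, I would construct mutually inverse affine maps $\mc S(\cN)\ra\Ga\CP$ and $\Ga\CP\ra\mc S(\cN)$, the only nonelementary input being the generalised Gleason theorem \cite{Mae90}; this is the single place where the hypothesis ``no type $I_2$ summand'' is used, while everything else is bookkeeping with the isomorphisms $\alpha_V$ and the restriction maps $\SigN(i_{V'V})$.

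\textbf{From a state to a global section.} Given $\rho\in\mc S(\cN)$, for each $V\in\VN$ I set $m_V^\rho:=\rho|_V\circ\alpha_V^{-1}\in\CP_V$, the transport along the Boolean isomorphism $\alpha_V$ of the restricted FAPM $\rho|_V$ on $\PV$. The point to check is that $(m_V^\rho)_{V\in\VN}$ is genuinely a global section, i.e. is compatible with \emph{all} restriction maps $\CP(i_{V'V})$. This reduces to the elementary identity $\SigN(i_{V'V})^{-1}(\alpha_{V'}(\hP'))=\alpha_V(\hP')$ for a projection $\hP'\in\mc P(V')$ viewed inside $V$, after which both $m_V^\rho\circ\SigN(i_{V'V})^{-1}$ and $m_{V'}^\rho$ send the clopen set $\alpha_{V'}(\hP')$ to $\rho(\hP')$.

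\textbf{From a global section back to a state.} Conversely, given $(m_V)\in\Ga\CP$ I define $m:\PN\ra[0,1]$ by $m(\hP):=m_V(\alpha_V(\hP))$ for any $V\in\VN$ with $\hP\in\PV$. Well-definedness is the first genuine step: for two contexts $V,W$ containing a projection $\hP\neq\hat 0,\hat 1$ I pass to the context $\{\hP\}''$ generated by $\hP$, which lies below both, and use the global-section compatibility to show both expressions equal $m_{\{\hP\}''}(\alpha_{\{\hP\}''}(\hP))$; the borderline cases $\hP\in\{\hat 0,\hat 1\}$ give $0$ and $1$ and are consistent. Finite additivity is the second step: for orthogonal $\hP,\hQ$ I work inside the single context $\{\hP,\hQ\}''$ and combine the fact that $\alpha_{\{\hP,\hQ\}''}$ is a Boolean isomorphism with the modular law defining $\CP_{\{\hP,\hQ\}''}$, so that $m(\hP+\hQ)=m(\hP)+m(\hQ)$; additivity over finite orthogonal families then follows by induction. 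The generalised Gleason theorem finally yields a unique state $\rho_m\in\mc S(\cN)$ with $\rho_m|_{\PN}=m$.

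\textbf{Inverseness, convexity, and the main obstacle.} These constructions are mutually inverse: starting from $\rho$, the associated FAPM is $\hP\mapsto\rho(\hP)$, which Gleason returns to $\rho$ by uniqueness; starting from $(m_V)$, restricting the reconstructed $m$ to each $V$ gives $S\mapsto m(\alpha_V^{-1}(S))=m_V(S)$. All maps are affine, since convex combinations are computed pointwise in each $\CP_V$ and on $\PN$, and I would check that an affine combination of global sections is again compatible; hence the bijection is an isomorphism of convex sets. I expect the main obstacle to be the gluing in the third paragraph, namely verifying that the compatibility of a global section is \emph{exactly} strong enough to make $m$ both well defined on all of $\PN$ and finitely additive \emph{across} contexts (not merely within a fixed one), together with a careful treatment of the degenerate contexts around $\hat 0$ and $\hat 1$; the deep analytic content is entirely absorbed into the Gleason step.
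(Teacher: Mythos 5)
Your proposal follows essentially the same route as the paper's proof: transport $\rho|_V$ along $\alpha_V$ to get a global section, and conversely glue a global section into a finitely additive probability measure on $\PN$ and invoke the generalised Gleason theorem. The only difference is that you spell out the well-definedness and cross-context additivity checks (via $\{\hP\}''$ and $\{\hP,\hQ\}''$) that the paper declares obvious, which is fine and correct since orthogonal projections always share an abelian context.
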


\begin{proof}
Let $\rho\in\mc S(\cN)$. Then the family $(\rho|_V\circ\alpha_V^{-1})_{V\in\VN}$, where
\begin{align}
			\rho|_V\circ\alpha_V^{-1}:\Cl(\SigN_V)\ra [0,1]
\end{align}
for each $V$, clearly is a global section of $\CP$.

Conversely, let $m=(m_V)_{V\in\VN}$ be a global section of $\CP$. For each $V\in\VN$, we have a FAPM
\begin{equation}
			m_V:\Cl(\SigN_V) \lra [0,1]
\end{equation}
and hence a FAPM $m_V\circ\alpha_V:\PV\ra [0,1]$ by the isomorphism $\alpha_V:\PV\ra\Cl(\SigN_V)$. The fact that $m=(m_V)_{V\in\VN}$ is a global section of $\CP$ implies that
\begin{equation}			\label{Eq_RestrictionOfProbabMeas}
			(m_V\circ\alpha_V)|_{\mc P(V')}=m_{V'}\circ\alpha_{V'}
\end{equation}
for all $V,V'\in\VN$ such that $V'\subset V$. For all $\hP\in\PN$, let
\begin{equation}
			\mu(\hP):=(m_V\circ\alpha_V)(\hP),
\end{equation}
where $V$ is some abelian von Neumann subalgebra that contains $\hP$. This is well-defined because of eq. \eq{Eq_RestrictionOfProbabMeas}. Clearly, $\mu:\PN\ra [0,1]$ is a finitely additive probability measure on the projections in $\cN$. By the generalised version of Gleason's theorem \cite{Mae90}, the measure $\mu$ corresponds to a unique state $\rho_\mu$ on $\cN$ such that $\rho_\mu|_{\PN}=\mu$.
\end{proof}

\begin{lemma}			\label{Lem_GaCPAndM(SigN)}
There is an isomorphism of convex sets between the set $\Ga\CP$ of global sections of the presheaf $\CP$ and the set $\cM(\SigN)$ of probability measures on $\SigN$ (as in Def. \ref{Def_ProbabMeasOnClopenSubobjs}).
\end{lemma}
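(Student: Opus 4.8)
The plan is to realise the isomorphism as a composite of the two isomorphisms already at our disposal, and then to record its explicit form. By Proposition \ref{Prop_IsomS(N)AndGaCP} there is a convex isomorphism $\Ga\CP\cong\mc S(\cN)$, and by Theorem \ref{Thm_IsomS(N)AndM(SigM)} a convex isomorphism $\mc S(\cN)\cong\cM(\SigN)$; composing these yields a convex isomorphism $\Ga\CP\cong\cM(\SigN)$. Since both given isomorphisms are constructed by passing through a finitely additive probability measure on the projection lattice $\PN$ (and the generalised Gleason theorem), the first thing I would do is read off the composite concretely and observe that it is the map
\begin{align}
			\Phi:\Ga\CP &\lra \cM(\SigN)\\			\nonumber
			m=(m_V)_{V\in\VN} &\lmt \big[\,\ps S\mapsto(V\mapsto m_V(\ps S_V))\,\big].
\end{align}
This reduces the lemma to checking that $\Phi$ is a well-defined affine bijection, most of which is routine given the earlier results.

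First I would verify that $\Phi(m)$ really is a probability measure in the sense of Definition \ref{Def_ProbabMeasOnClopenSubobjs}. Antitonicity of $V\mapsto m_V(\ps S_V)$ follows from the subobject condition $\SigN(i_{V'V})(\ps S_V)\subseteq\ps S_{V'}$, whence $\ps S_V\subseteq\SigN(i_{V'V})^{-1}(\ps S_{V'})$, combined with monotonicity of the FAPM $m_V$ and the global-section identity $m_{V'}=m_V\circ\SigN(i_{V'V})^{-1}$; together these give $m_V(\ps S_V)\le m_{V'}(\ps S_{V'})$ for $V'\subseteq V$. Normalisation $\mu(\Sig)=1_{\VN}$ is immediate, since the top subobject $\Sig$ has $\Sig_V=\SigN_V$ and $m_V(\SigN_V)=1$. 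For the modularity axiom I would use that meets and joins in $\Subcl\SigN$ are computed stagewise, so $(\ps S\meet\ps T)_V=\ps S_V\cap\ps T_V$ and $(\ps S\join\ps T)_V=\ps S_V\cup\ps T_V$, and then apply the defining modularity \eq{Eq_FAPM} of $m_V$ stagewise. Affineness of $\Phi$ is clear, as convex combinations in $\Ga\CP$ and in $\cM(\SigN)$ are both formed stagewise and $m\mapsto m_V(\ps S_V)$ is affine.

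The construction of the inverse is where the only genuine subtlety lies, and it is the step I expect to be the main obstacle: to define $\Psi(\mu)=(m_V)_V$ directly by $m_V(S):=\mu(\ps S')(V)$ for some clopen subobject $\ps S'$ with $\ps S'_V=S$, one must know that $\mu(\ps S')(V)$ depends only on the component $\ps S'_V$ and not on the chosen global subobject $\ps S'$ --- a locality property of $\mu$. Rather than re-deriving this, I would extract it from the proof of Theorem \ref{Thm_IsomS(N)AndM(SigM)}: that result associates with $\mu$ a \emph{unique} finitely additive probability measure $m_\mu:\PN\ra[0,1]$, characterised by $m_\mu(\hP)=\mu(\ps\deo(\hP))(V)$ for any $V\in\VN$ with $\hP\in\PV$, the independence of the choice of $V$ being exactly the required locality statement. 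Setting $m_V:=m_\mu\circ\alpha_V^{-1}:\Cl(\SigN_V)\ra[0,1]$ then yields a FAPM at each stage, and the restriction compatibility $m_{V'}=m_V\circ\SigN(i_{V'V})^{-1}$ is precisely the condition shown in the proof of Proposition \ref{Prop_IsomS(N)AndGaCP} to make $(m_V)_V$ a global section of $\CP$. Finally I would check that $\Phi$ and $\Psi$ are mutually inverse: writing $\hP_{\ps S_V}:=\alpha_V^{-1}(\ps S_V)$ and using $(\ps\deo(\hP_{\ps S_V}))_V=\ps S_V$, one gets $\Phi(\Psi(\mu))(\ps S)(V)=m_\mu(\hP_{\ps S_V})=\mu(\ps\deo(\hP_{\ps S_V}))(V)=\mu(\ps S)(V)$, the last equality by locality, and $\Psi(\Phi(m))=m$ follows similarly, so that $\Phi$ is the desired convex isomorphism.
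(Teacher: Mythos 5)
Your proposal is correct, and its computational core --- the stagewise evaluation map $m\mapsto\bigl[\ps S\mapsto(m_V(\ps S_V))_{V\in\VN}\bigr]$ together with the stagewise verification of normalisation and modularity --- is exactly the construction the paper uses. The differences are in the framing and in the inverse direction. You first obtain the isomorphism abstractly by composing Prop.~\ref{Prop_IsomS(N)AndGaCP} with Thm.~\ref{Thm_IsomS(N)AndM(SigM)}, and you build the inverse by routing through the finitely additive measure $m_\mu$ on $\PN$ and the locality property $\mu(\ps S)(V)=\mu(\ps\deo(\hP_{\ps S_V}))(V)$ extracted from the proof of Thm.~\ref{Thm_IsomS(N)AndM(SigM)} in \cite{Doe09a}. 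The paper instead proves the lemma directly, without invoking that theorem; this matters because immediately afterwards the paper combines the lemma with Prop.~\ref{Prop_IsomS(N)AndGaCP} to give an \emph{alternative} proof of Thm.~\ref{Thm_IsomS(N)AndM(SigM)}, a purpose your route cannot serve (it is not circular in the absolute sense, since the theorem is independently established in \cite{Doe09a}, but it collapses the intended alternative derivation). On the other hand, your write-up is more careful on two points the paper passes over in silence: you check that $V\mapsto m_V(\ps S_V)$ is genuinely antitone, as required by the codomain $A(\VN,[0,1])$, and you explicitly isolate the well-definedness issue in the inverse --- that $\mu(\ps S)(V)$ must depend only on the component $\ps S_V$ --- which the paper's proof assumes tacitly when it sets $\mu_V(S):=\mu(\ps S)(V)$ for ``a'' clopen subobject $\ps S$ with $\ps S_V=S$. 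If you want your argument to stand independently of Thm.~\ref{Thm_IsomS(N)AndM(SigM)} and so preserve the paper's alternative-proof structure, the one genuinely non-routine step is to derive that locality property directly from Def.~\ref{Def_ProbabMeasOnClopenSubobjs}, rather than importing it from \cite{Doe09a}.
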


\begin{proof}
Let $m=(m_V)_{V\in\VN}$ be a global section of $\Ga\CP$. This defines a map
\begin{align}
			\tilde m:\Subcl\SigN &\lra A(\VN,[0,1])\\			\nonumber
			\ps S=(\ps S_V)_{V\in\VN} &\lmt (m_V(\ps S_V))_{V\in\VN}
\end{align}
such that $\tilde m(\SigN)=(m_V(\SigN_V))_{V\in\VN}=1_{\VN}$. Moreover, writing $\hP_S:=\alpha_V^{-1}(S)$, where $S\in\Cl(\SigN_V)$, we have for all $\ps S_1,\ps S_2\in\Subcl\SigN$ and for all $V\in\VN$,
\begin{align}
			(\tilde m(\ps S_1)+\tilde m(\ps S_2))_V &= m_V(\ps S_{1;V})+m_V(\ps S_{2;V})\\
			&\stackrel{\eq{Eq_FAPM}}{=} m_V(\ps S_{1;V}\cup\ps S_{2;V})+m_V(\ps S_{1;V}\cap\ps S_{1;V})\\
			&= (\tilde m(\ps S_1\join\ps S_2))_V+(\tilde m(\ps S_1\meet\ps S_2))_V,
\end{align}
so $\tilde m\in\mc M(\SigN)$. 

Conversely, let $\mu\in\mc M(\SigN)$. If $S\in\Cl(\SigN_V)$ is a clopen subset of $\SigN_V$, the Gelfand spectrum of $V$, then there is a clopen subobject $\ps S\in\Subcl\SigN$ such that $\ps S_V=S$. Define
\begin{align}
			\mu_V:\Cl(\SigN_V) &\lra [0,1]\\			\nonumber
			S &\lmt \mu(\ps S)(V).
\end{align}
For each $V\in\VN$, $\mu_V(\SigN_V)=\mu(\SigN_V)(V)=1$. Moreover, if $S,T\in\Cl(\SigN_V)$ are disjoint clopen subsects of $\SigN_V$, there are clopen subobjects of $\ps S,\ps T$ of $\SigN$ such that $\ps S_V=S$ and $\ps T_V=T$, and, for all $V\in\VN$,
\begin{align}
			\mu_V(S\cup T) &= \mu_V((\ps S\join\ps T)_V)\\
			&= \mu(\ps S\join\ps T)(V)\\
			&= (\mu(\ps S)+\mu(\ps T)-\mu(\ps S\meet\ps T))(V)\\
			&= \mu(\ps S)(V)+\mu(\ps T)(V)\\
			&= \mu_V(S)+\mu_V(T),
\end{align}
so $\mu_V$ is a finitely additive probability measure on $\Cl(\SigN_V)$, and 
\begin{equation}
			\tilde\mu:=(\mu_V)_{V\in\VN}\in\Ga\CP. 
\end{equation}
Clearly, the two maps are inverse to each other.
\end{proof}

Prop. \ref{Prop_IsomS(N)AndGaCP} and Lemma \ref{Lem_GaCPAndM(SigN)} together immediately imply Thm. \ref{Thm_IsomS(N)AndM(SigM)} (so we have an alternative proof to the one given in \cite{Doe09a}). The workhorse here is of course the generalised version of Gleason's theorem \cite{Mae90}. Prop. \ref{Prop_IsomS(N)AndGaCP} can be read as a reformulation of this powerful theorem in terms of global sections of a certain presheaf. This is conceptually similar to the reformulation of the Kochen-Specker theorem in terms of non-existence of global sections of the spectral presheaf \cite{IshBut98,IHB00,Doe05}. De Groote formulated a similar result in \cite{deG07}, Thm. 7.2.

We will write $m_\rho=(m_{\rho;V})_{V\in\VN}$ for the global section of $\CP$ that corresponds to a given state $\rho$ of $\cN$. The components of $m_\rho$ are given by
\begin{align}			\label{Eq_ComponentsOfm_rho}
			\forall V\in\VN: m_{\rho;V} = \rho|_V\circ\alpha_V^{-1}:\Cl(\SigN_V)\lra [0,1].
\end{align}
Here, the isomorphism $\alpha_V^{-1}$ merely switches from clopen subsets of $\SigN_V$ to projections in $V$.

\begin{definition}			\label{Def_StatePropPairing}
There is a \emph{state-proposition pairing}
\begin{align}
			p:\Ga\CP\times\Subcl\SigN &\lra \mc A(\VN,[0,1])\\			\nonumber
			(m_\rho,\ps S) &\lmt m_\rho(\ps S),
\end{align}
given by
\begin{equation}
			\forall V\in\VN: (m_\rho(\ps S))_V := m_{\rho;V}(\ps S_V)
\end{equation}
\end{definition}

Let $\mu_\rho\in\mc M(\SigN)$ be the probability measure on $\SigN$ corresponding to $\rho$ by Thm. \ref{Thm_IsomS(N)AndM(SigM)}. We note that for each $V\in\VN$, we have $(m_\rho(\ps S))_V=\mu_{\rho;V}(\ps S_V)=(\mu_\rho(\ps S))_V$, so by construction,
\begin{equation}			\label{Eq_StatePropPairing}
			m_\rho(\ps S) = \mu_\rho(\ps S).
\end{equation}

In standard quantum theory, a unitary acts on a state in the following way: if $\rho$ is a normal state with density matrix $\trho$, that is, $\rho=\tr(\trho-):\cN\ra\bbC$ (where $\trho$ is followed by a `placeholder' into which an argument can be inserted), then
\begin{equation}			\label{Eq_TimeEvolutionOfStates}
			\hU.\rho=\hU.\tr(\trho-):=\tr(\hU\trho\hU^*-)=\tr(\trho\hU^*-\hU)=\rho\circ\phi_{\hU^*},
\end{equation}
where $\phi_{\hU^*}:\cN\ra\cN$ is the inner automorphism induced by $\hU^*$. More generally, even if $\rho$ is not a normal state, we define $\hU.\rho:=\rho\circ\phi_{\hU^*}$.

\begin{definition}
Let $m_\rho\in\Ga\CP$ be the global section of $\CP$ corresponding to the state $\rho$, let $\hU\in\UN$ be a unitary, and let $\phi_{\hU^*}$ be the inner automorphism induced by $\hU^*$. We define $\hU.m_\rho$ by
\begin{equation}			\label{Def_U.m_rho}
				(\hU.m_\rho)_V := m_{\rho;\hU^*V\hU}\circ\alpha_{\hU^* V\hU}\circ\phi|_{\hU^*}\circ\alpha_V^{-1}:\Cl(\SigN_V)\lra [0,1]
\end{equation}
for all $V\in\VN$.
\end{definition}

Here, the isomorphisms $\alpha_V^{-1}$ and $\alpha_{\hU^*V\hU}$ merely serve to switch from clopen subsets to projections and back.

\begin{lemma}			\label{Lem_StatePreshSchroedinger}
$\hU.m_\rho$ is the global section of $\CP$ corresponding to the state $\hU.\rho:=\rho\circ\phi_{\hU^*}$, that is, $\hU.m_\rho=m_{\hU.\rho}=m_{\rho\circ\phi_{\hU^*}}$.
\end{lemma}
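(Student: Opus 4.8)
The plan is to verify the claimed identity componentwise: a global section of $\CP$ is determined by its family of components $(\cdot)_V$, $V\in\VN$, so it suffices to show $(\hU.m_\rho)_V = m_{\hU.\rho;V}$ for every $V$. Both sides are maps $\Cl(\SigN_V)\to[0,1]$, so I would fix $V$ and a clopen set $S\in\Cl(\SigN_V)$, write $\hP:=\alpha_V^{-1}(S)\in\PV$ for the corresponding projection, and evaluate both sides on $S$, chasing through the defining compositions.

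First I would unwind the left-hand side using \eq{Def_U.m_rho}, reading the restriction as the isomorphism $\phi_{\hU^*}|_V:V\to\hU^*V\hU$, $\hA\mapsto\hU^*\hA\hU$, which carries $V$ onto the abelian subalgebra $\hU^*V\hU\in\VN$. The key simplification is that the selected component $m_{\rho;\hU^*V\hU}$ is itself of the form $\rho|_{\hU^*V\hU}\circ\alpha_{\hU^*V\hU}^{-1}$ by \eq{Eq_ComponentsOfm_rho}, so the two occurrences of $\alpha_{\hU^*V\hU}$ cancel, giving $m_{\rho;\hU^*V\hU}\circ\alpha_{\hU^*V\hU} = \rho|_{\hU^*V\hU}$. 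Applied to $\hP$, this reduces $(\hU.m_\rho)_V(S)$ to $\rho|_{\hU^*V\hU}(\hU^*\hP\hU) = \rho(\hU^*\hP\hU)$.

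Next I would unwind the right-hand side. Applying \eq{Eq_ComponentsOfm_rho} to the state $\hU.\rho=\rho\circ\phi_{\hU^*}$ gives $m_{\hU.\rho;V} = (\rho\circ\phi_{\hU^*})|_V\circ\alpha_V^{-1}$, whence $m_{\hU.\rho;V}(S) = (\rho\circ\phi_{\hU^*})(\hP) = \rho(\hU^*\hP\hU)$. Comparing, both sides evaluate to $\rho(\hU^*\hP\hU)$; since $S$ and $V$ were arbitrary, the components agree and $\hU.m_\rho = m_{\hU.\rho}$. Because $\hU.\rho$ is again a state of $\cN$, Prop.~\ref{Prop_IsomS(N)AndGaCP} guarantees that $m_{\hU.\rho}$ is a genuine global section of $\CP$, so this identity simultaneously confirms that the family $\hU.m_\rho$ lies in $\Ga\CP$ and identifies it.

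I expect no genuine obstacle here; the only place a slip could occur is the bookkeeping of the conjugation. One must keep straight that $\phi_{\hU^*}$ conjugates by $\hU^*$ rather than $\hU$, that it sends $V$ to $\hU^*V\hU$ (so the target stage of the restriction matches exactly the component $m_{\rho;\hU^*V\hU}$ picked out in \eq{Def_U.m_rho}), and that these matching choices are precisely what lets the $\alpha$'s cancel and produce the common value $\rho(\hU^*\hP\hU)$, consistent with the standard convention $\hU.\rho=\rho\circ\phi_{\hU^*}$ recorded in \eq{Eq_TimeEvolutionOfStates}.
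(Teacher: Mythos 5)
Your proposal is correct and follows essentially the same route as the paper: unwind \eq{Def_U.m_rho}, use \eq{Eq_ComponentsOfm_rho} to cancel the pair $\alpha_{\hU^*V\hU}^{-1}\circ\alpha_{\hU^*V\hU}$, and identify both sides with $(\rho\circ\phi_{\hU^*})|_V\circ\alpha_V^{-1}$. The only cosmetic difference is that you evaluate on a fixed clopen set $S$ (obtaining the common value $\rho(\hU^*\hP\hU)$) where the paper works directly with the composed maps.
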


\begin{proof}
First note that $m_{\rho;\hU^*V\hU}=\rho|_{\hU^*V\hU}\circ\alpha_{\hU^*V\hU}^{-1}$ for all $V\in\VN$ by eq. \eq{Eq_ComponentsOfm_rho}. Hence,
\begin{align}
			(\hU.m_\rho)_V &= \rho|_{\hU^*V\hU}\circ\alpha_{\hU^*V\hU}^{-1}\circ\alpha_{\hU^* V\hU}\circ\phi|_{\hU^*}\circ\alpha_V^{-1}\\
			&= \rho|_{\hU^*V\hU}\circ\phi|_{\hU^*}\circ\alpha_V^{-1}\\
			&= (\rho\circ\phi_{\hU^*})|_V\circ\alpha_V^{-1}\\
			&= m_{\hU.\rho;V}.
\end{align}
\end{proof}


\begin{definition}			\label{Def_StatePresheafAtTimet}
Let $(\hU_t)_{t\in\bbR}$ be a strongly continuous one-parameter group of unitaries in $\cN$, induced by a self-adjoint operator $\hA$ affiliated with $\cN$, and let $\rho_0\in\mc S(\cN)$ be a state. We write $\rho_t:=\hU_t.\rho_0=\rho_0\circ\phi_{\hU_{-t}}$ for the state evolved by time $t$ (we recall that $\hU_t^*=\hU_{-t}$), and $m_{\rho_t}=(m_{\rho_{t;V}})_{V\in\VN}$ for the corresponding global section of the presheaf $\CP$ of classical probability measures, that is,
\begin{align}
					m_{\rho_t} = \hU_t.m_{\rho_0} = m_{\hU_t.\rho_0}.
\end{align}
The \emph{flow on the global sections of the presheaf $\CP$ of classical probability measures} induced by the one-parameter group $(\hU_t)_{t\in\bbR}$ is the map
\begin{align}
			\ol F_{\hA}:\bbR\ra\Aut(\Ga\CP)\\
			t \lra m_{\rho_t}.
\end{align}
\end{definition}


By Lemma \ref{Lem_GaCPAndM(SigN)}, we have $\Ga\CP\simeq\mc M(\SigN)$, so we can regard a flow $\ol F_{\hA}:\bbR\ra\Aut(\Ga\CP)$ alternatively as a flow $\ol F_{\hA}:\bbR\ra\Aut(\mc M(\SigN))$.

\section{Compatibility between the Heisenberg picture and the Schr\"odinger picture and covariance under the unitary group}			\label{Sec_Compatibility}
In standard quantum theory, the Heisenberg picture and the Schr\"o- dinger picture of time evolution are compatible in the following sense: one can either apply time evolution to the proposition and leave the state fixed, or evolve the state and leave the proposition fixed -- the two situations cannot be distinguished physically. We will show that our reformulation of the Heisenberg picture and the Schr\"odinger picture based on flows on $\Subcl\SigN$ respectively on $\Ga\CP\simeq\mc M(\SigN)$ are compatible in an analogous manner.

Let $\rho_0\in\mc S(\cN)$, interpreted as the state of the quantum system at time $t_0=0$, let $\hP_0\in\PN$ be a projection, representing a proposition of the form ``$\Ain\de;0$'' at time $t_0=0$, and let $(\hU_t)_{t\in\bbR}$ be a strongly continuous one-parameter group of unitaries in $\cN$. Let $t\in\bbR$, let $\rho_t=\hU_t.\rho_0=\rho_0\circ\phi_{\hU_{-t}}$ be the state evolved by time $t$ (Schr\"odinger picture), and let $\hP_t=\hU_{-t}\hP_0\hU_t$ be the projection evolved by time $t$ (Heisenberg picture). Then
\begin{equation}
			\rho_t(\hP_0) = \rho_0\circ\phi_{\hU_{-t}}(\hP_0) =\rho_0(\hU_{-t}\hP_0\hU_t) = \rho_0(\hP_t).
\end{equation}
This is the compatibility between Schr\"odinger picture and Heisenberg picture of standard quantum theory. The expression $\rho_t(\hP_0)$ is the expectation value of the proposition ``$\Ain\de;0$'' represented by $\hP_0$ being true upon measurement, that is, the probability of finding a measurement outcome in $\de$ when $A$ is measured in the state $\rho_t$. Since we consider measurements of $\hP_0$ in the state $\rho_t$, we have to pick a context $V\in\VN$ (at least implicitly) that contains the binary observable $\hP_0$, because contexts represent co-measurable sets of physical quantities, and only these are accessible to experiment. Analogously, evaluating $\rho_0(\hP_t)$ means picking a context that contains $\hP_t$. If $V$ contains $\hP_0$, then $\hU_{-t}V\hU_t$ contains $\hP_t=\hU_{-t}\hP_0\hU_t$. The relevant contexts can be displayed explicitly:
\begin{equation}			\label{Eq_CompatibilityStandardForm}
			\rho_t|_V(\hP_0) = \rho_0|_{\hU_{-t}V\hU_t}(\hP_t).
\end{equation}

We now show that our definitions of the Heisenberg and the Schr\"o- dinger picture based on flows on $\Subcl\SigN$ respectively on $\Ga\CP$ mirror this compatibility property, including the dependence on two different contexts:

\begin{proposition}			\label{Prop_CompatibilityToposForm}
Let $\rho_0\in\mc S(\cN)$ be a state, and let $m_{\rho_0}$ be the corresponding global section of $\CP$ (see eq. \eq{Eq_ComponentsOfm_rho}). Let $\ps S_0\in\Subcl\SigN$ be a clopen subobject, representing a proposition about the quantum system, e.g. $\ps S_0=\ps\deo(\hP_0)$, where $\hP_0$ represents the proposition ``$\Ain\de;0$''. Let $(\hU_t)_{t\in\bbR}$ be a strongly continuous one-parameter group of unitaries in $\cN$, let $m_{\rho_t}$ be the global section of $\CP$ corresponding to $\rho_t=\hU_t.\rho_0=\rho_0\circ\phi_{\hU_{-t}}$ (cf. Def. \ref{Def_StatePresheafAtTimet}), and let $\ps S_t$ be the clopen subobject
\begin{equation}
			\ps S_t:=\ptphi_{\hU_t}(\ps S_0)
\end{equation}
(cf. eq. \eq{Def_S_tHeisenberg}). Then, for all $V\in\VN$ and all $t\in\bbR$,
\begin{equation}			\label{Eq_CompatibilityToposForm}
			(m_{\rho_t}(\ps S_0))_V = (m_{\rho_0}(\ps S_t))_{\hU_{-t}V\hU_t},
\end{equation}
where $m_{\rho_t}(\ps S_0)$ is the antitone function from $\VN$ to $[0,1]$ from Def. \eq{Def_StatePropPairing}, and analogously $m_{\rho_0}(\ps S_t)$.
\end{proposition}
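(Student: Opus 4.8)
The plan is to unwind both sides of the claimed identity at a fixed context $V\in\VN$ and a fixed time $t$, reducing everything to the definitions already set up in Sections \ref{Subsec_HeisenbergPic} and \ref{Subsec_SchroedingerPic}, and then to invoke the single standard-quantum-theory compatibility equation $\rho_t(\hP_0)=\rho_0(\hP_t)$ in the form \eq{Eq_CompatibilityStandardForm} as the arithmetic core. The key observation is that both the Heisenberg evolution (acting on $\ps S_0$) and the Schr\"odinger evolution (acting on $\rho_0$) are ultimately expressed through the \emph{same} unitary conjugation and the \emph{same} context shift $V\mapsto\hU_{-t}V\hU_t$, so the two sides should collapse onto a common expression after translating from subobjects/measures back to projections and states.

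First I would expand the left-hand side. By Def. \ref{Def_StatePropPairing} and eq. \eq{Eq_StatePropPairing}, $(m_{\rho_t}(\ps S_0))_V=m_{\rho_t;V}(\ps S_{0;V})$, and by eq. \eq{Eq_ComponentsOfm_rho} this equals $\rho_t|_V(\alpha_V^{-1}(\ps S_{0;V}))=\rho_t|_V(\hP_{\ps S_{0;V}})$, i.e.\ the state $\rho_t$ evaluated on the projection in $V$ corresponding to the $V$-component of the fixed subobject $\ps S_0$. Next I would expand the right-hand side similarly: $(m_{\rho_0}(\ps S_t))_{\hU_{-t}V\hU_t}=\rho_0|_{\hU_{-t}V\hU_t}(\hP_{(\ps S_t)_{\hU_{-t}V\hU_t}})$. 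Here I must identify the projection associated with the $(\hU_{-t}V\hU_t)$-component of the evolved subobject $\ps S_t=\ptphi_{\hU_t}(\ps S_0)$. This is exactly where Cor. \ref{Cor_UnitaryActionOnProjs} enters: applying it with the context replaced by $\hU_{-t}V\hU_t$ (so that its internal context shift $W\mapsto\hU^*W\hU=\hU_{-t}W\hU_t$ sends $\hU_{-t}V\hU_t$ back to $V$), I expect to obtain $\hP_{(\ps S_t)_{\hU_{-t}V\hU_t}}=\hU_{-t}\,\hP_{\ps S_{0;V}}\,\hU_t$, the conjugated projection.

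With both sides reduced, the left-hand side reads $\rho_t|_V(\hP_{\ps S_{0;V}})$ and the right-hand side reads $\rho_0|_{\hU_{-t}V\hU_t}(\hU_{-t}\hP_{\ps S_{0;V}}\hU_t)$. Writing $\hP_0:=\hP_{\ps S_{0;V}}$ and $\hP_t:=\hU_{-t}\hP_0\hU_t$, these are precisely the two sides of the standard compatibility identity \eq{Eq_CompatibilityStandardForm}, which follows from $\rho_t=\rho_0\circ\phi_{\hU_{-t}}$ together with $\phi_{\hU_{-t}}(\hP_0)=\hU_{-t}\hP_0\hU_t$. Invoking Lemma \ref{Lem_StatePreshSchroedinger} to confirm that $m_{\rho_t}$ is indeed the global section attached to $\rho_t=\hU_t.\rho_0$ closes the argument.

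The main obstacle I anticipate is purely bookkeeping rather than conceptual: getting the conjugation conventions and the direction of the context shift exactly right. Corollary \ref{Cor_UnitaryActionOnProjs} is stated with $\ptphi_{\hU^*}$ and context shift $V\mapsto\hU^*V\hU$, whereas here the Heisenberg flow is $\ptphi_{\hU_t}$ acting via $\hU_{-t}(\cdot)\hU_t$ and the relevant context is already $\hU_{-t}V\hU_t$; I must substitute carefully so that the $\hU_t,\hU_{-t}=\hU_t^*$ factors land on the correct sides and the two context shifts cancel to return $V$. Once the substitution is aligned with the footnote convention $\hP_t=\hU_{-t}\hP_0\hU_t$, the remaining steps are direct applications of the cited results, and no genuinely new estimate or construction is needed.
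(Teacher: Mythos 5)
Your proof is correct and uses essentially the same ingredients as the paper's --- Def.~\ref{Def_StatePropPairing}, eq.~\eq{Eq_ComponentsOfm_rho}, Lemma~\ref{Lem_StatePreshSchroedinger} and Cor.~\ref{Cor_UnitaryActionOnProjs} --- the only difference being that you expand both sides down to $\rho_0(\hU_{-t}\hP_{\ps S_{0;V}}\hU_t)$ and meet in the middle, whereas the paper transforms the left-hand side into the right-hand side in a single chain. One small slip in your parenthetical: applying Cor.~\ref{Cor_UnitaryActionOnProjs} to get $\ptphi_{\hU_t}$ means taking $\hU=\hU_{-t}$, so the internal context shift is $W\mapsto\hU_t W\hU_{-t}$ (which is what sends $\hU_{-t}V\hU_t$ back to $V$), not $W\mapsto\hU_{-t}W\hU_t$ as written; your resulting formula $\hP_{(\ps S_t)_{\hU_{-t}V\hU_t}}=\hU_{-t}\hP_{\ps S_{0;V}}\hU_t$ is nevertheless correct.
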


\begin{proof}
We have, for all $V\in\VN$,
\begin{align}
			(m_{\rho_t}(\ps S_0))_V &\stackrel{\text{Def. \ref{Def_StatePropPairing}}}{=} m_{\rho_t;V}(\ps S_{0;V})\\
			&\stackrel{\text{Lemma \ref{Lem_StatePreshSchroedinger}}}{=} (\hU_t.m_{\rho_0})_V(\ps S_{0;V})\\
			&\stackrel{\eq{Def_U.m_rho}}{=} m_{\rho_0;\hU_{-t}V\hU_t}\circ\alpha_{\hU_{-t}V\hU_t}\circ\phi_{\hU_{-t}}\circ\alpha_V^{-1}(\ps S_{0;V})\\
			&= m_{\rho_0;\hU_{-t}V\hU_t}\circ\alpha_{\hU_{-t}V\hU_t}(\hU_{-t}\hP_{\ps S_{0;V}}\hU_t)\\
			&\stackrel{\text{Cor. \ref{Cor_UnitaryActionOnProjs}}}{=} m_{\rho_0;\hU_{-t}V\hU_t}\circ\alpha_{\hU_{-t}V\hU_t}(\hP_{\ptphi_{\hU_t}(\ps S_0)_{\hU_{-t}V\hU_t}})\\
			&= m_{\rho_0;\hU_{-t}V\hU_t}(\ptphi_{\hU_t}(\ps S_0)_{\hU_{-t}V\hU_t})\\
			&\stackrel{\eq{Def_S_tHeisenberg}}{=} m_{\rho_0;\hU_{-t}V\hU_t}(\ps S_{t;\hU_{-t}V\hU_t})\\
			&\stackrel{\text{Def. \ref{Def_StatePropPairing}}}{=} (m_{\rho_0}(\ps S_t))_{\hU_{-t}V\hU_t}.
\end{align}
\end{proof}


Equation \eq{Eq_CompatibilityToposForm} is the analogue of the usual quantum-theoretic relation \eq{Eq_CompatibilityStandardForm}. Yet, \eq{Eq_CompatibilityToposForm} holds for \emph{all} contexts $V\in\VN$ simultaneously. 

Information that can be extracted physically from a quantum system is (a) outcomes of experiments, and (b) expectation values of such outcomes in the long run. The well-known \emph{Born rule} allows to calculate the expectation value of a physical quantity in a given state. As was shown in \cite{Doe09a,DoeIsh12} (see also \cite{DoeDew12b}), the Born rule is captured by the topos approach to quantum theory. Concretely, if ``$\Ain\de$'' is a proposition, represented by $\ps\deo(\hP)\in\Subcl\SigN$, and $\mu_\rho:\Subcl\SigN\ra\cA(\VN,[0,1])$ is the probability measure corresponding to a state $\rho\in\mc S(\cN)$, then the minimum of the antitone function $\mu_{\rho}(\ps\deo(\hP)):\VN\ra [0,1]$ is the expectation value of finding the proposition to be true upon measurement of the physical quantity $A$, that is, the probability of finding the measurement outcome to lie in $\de$,
\begin{equation}
			\on{Prob}(\text{``}\Ain\de\text{''};\rho) = \min_{V\in\VN} (\mu_\rho(\ps\deo(\hP)))_V \stackrel{\eq{Eq_StatePropPairing}}{=} \min_{V\in\VN} (m_\rho(\ps\deo(\hP)))_V.
\end{equation}
If $V_{\hA}$ is a context that contains the self-adjoint operator $\hA$ representing the physical quantity $A$, e.g. $V_{\hA}=\{\hA,\hat 1\}''$, then the minimum is attained at $V_{\hA}$, so $\on{Prob}(\text{``}\Ain\de\text{''};\rho)=m_\rho(\ps\deo(\hP))_{V_{\hA}}$ \cite{Doe09a}. Prop. \ref{Prop_CompatibilityToposForm} implies in particular that the minima of the two antitone functions
\begin{equation}
			m_{\rho_t}(\ps\deo(\hP_0)),\;m_{\rho_0}(\ps\deo(\hP_t)):\VN\ra [0,1]
\end{equation}
are equal. This means that the expectation values, which are the physically relevant quantities, are equal for the Schr\"odinger picture, which is represented by $m_{\rho_t}(\ps\deo(\hP_0))$, and the Heisenberg picture, which is represented by $m_{\rho_0}(\ps\deo(\hP_t))$.

\paragraph{Covariance} The compatibility between the Schr\"odinger and the Heisenberg picture can alternatively be formulated in terms of a covariance relation. In standard quantum theory, the physical predictions do not change if we replace the state $\rho_0$ by $\rho_t=\hU_t.\rho_0=\rho_0\circ\phi_{\hU_{-t}}$ (cf. Def. \ref{Def_StatePresheafAtTimet}) and \emph{at the same time} replace each physical quantity $\hA_0$ by $\hU_t\hA_0\hU_{-t}$, hence standard quantum theory is covariant under the action of the unitary group (of the von Neumann algebra of observables). For example, if $\rho_0$ is a normal state with density matrix $\trho_0$,
\begin{align}
			\rho_0(\hA_0) &= \tr(\trho_0\hA_0)\\
			&= \tr(\hU_t\trho_0\hU_{-t}\hU_t\hA_0\hU_{-t})\\
			&= \rho_t(\hU_t\hA\hU_{-t}).
\end{align}
We will focus on projections here, so $\hA_0=\hP_0$. Then, by the usual convention (see footnote \ref{FN_TimeEvolutionOfProjs}), $\hU_t\hP_0\hU_{-t}=\hP_{-t}$ (and not $\hP_t$), so the covariance relation reads
\begin{equation}
			\rho_0(\hP_0)=\rho_t(\hP_{-t}).
\end{equation}
If $V\in\VN$ is a context that contains $\hP_0$, then $\hU_t V\hU_{-t}$ contains $\hP_{-t}$, so explicitly noting contexts as in equation \eq{Eq_CompatibilityStandardForm}, we have
\begin{equation}			\label{Eq_CovarianceStandardQT}
			\rho_0|_V(\hP_0)=\rho_t|_{\hU_t V\hU_{-t}}(\hP_{-t}).
\end{equation}

\begin{proposition}			\label{Prop_CovarianceToposForm}
Let $\rho_0\in\mc S(\cN)$ be a state, and let $m_{\rho_0}$ be the corresponding global section of $\CP$. Let $\ps S_0\in\Subcl\SigN$ be a clopen subobject, representing a proposition about the quantum system, e.g. $\ps S_0=\ps\deo(\hP_0)$, where $\hP_0$ represents the proposition ``$\Ain\de;0$''. Let $(\hU_t)_{t\in\bbR}$ be a strongly continuous one-parameter group of unitaries in $\cN$, let $m_{\rho_t}$ be the global section of $\CP$ corresponding to $\rho_t=\hU_t.\rho_0=\rho_0\circ\phi_{\hU_{-t}}$, and let $\ps S_{-t}$ be the clopen subobject
\begin{equation}			\label{Eq_S_-t}
			\ps S_{-t}:=\ptphi_{\hU_{-t}}(\ps S_0).
\end{equation}
Then, for all $V\in\VN$ and all $t\in\bbR$,
\begin{equation}			\label{Eq_CovarianceToposForm}
			(m_{\rho_0}(\ps S_0))_V = (m_{\rho_t}(\ps S_{-t}))_{\hU_t V\hU_{-t}}.
\end{equation}
\end{proposition}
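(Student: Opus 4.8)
The plan is to prove \eq{Eq_CovarianceToposForm} by a single chain of equalities parallel to the proof of Prop.~\ref{Prop_CompatibilityToposForm}, starting from the right-hand side and unfolding definitions until the left-hand side appears. To manage the contexts I would abbreviate $W:=\hU_t V\hU_{-t}$ and evaluate the state-proposition pairing of Def.~\ref{Def_StatePropPairing} at the stage $W$, so that $(m_{\rho_t}(\ps S_{-t}))_{W}=m_{\rho_t;W}(\ps S_{-t;W})$.

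The key steps are then: (i) replace $m_{\rho_t}$ using Lemma~\ref{Lem_StatePreshSchroedinger}, $m_{\rho_t}=\hU_t.m_{\rho_0}$; (ii) expand $(\hU_t.m_{\rho_0})_W$ by the defining formula \eq{Def_U.m_rho}, noting that since $\hU_t^*=\hU_{-t}$ the conjugated context simplifies to $\hU_{-t}W\hU_t=V$, giving
\begin{equation*}
(m_{\rho_t}(\ps S_{-t}))_{W}=m_{\rho_0;V}\circ\alpha_V\circ\phi_{\hU_{-t}}\circ\alpha_W^{-1}(\ps S_{-t;W});
\end{equation*}
(iii) apply Cor.~\ref{Cor_UnitaryActionOnProjs} to $\ps S_{-t}=\ptphi_{\hU_{-t}}(\ps S_0)$, which under the identification $\hU^*=\hU_{-t}$ (hence $\hU=\hU_t$) yields $\hP_{\ps S_{-t;W}}=\hU_t\,\hP_{\ps S_{0;\hU_{-t}W\hU_t}}\,\hU_{-t}=\hU_t\,\hP_{\ps S_{0;V}}\,\hU_{-t}$, again using $\hU_{-t}W\hU_t=V$. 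Feeding this projection through $\phi_{\hU_{-t}}$ conjugates by $\hU_{-t}$ and $\hU_t$, which cancels the outer unitaries and returns $\hP_{\ps S_{0;V}}=\alpha_V^{-1}(\ps S_{0;V})$; applying $\alpha_V$ recovers $\ps S_{0;V}$, so the whole expression collapses to $m_{\rho_0;V}(\ps S_{0;V})=(m_{\rho_0}(\ps S_0))_V$ by Def.~\ref{Def_StatePropPairing}, which is \eq{Eq_CovarianceToposForm}.

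I do not expect a genuine obstacle here; the only thing demanding care is the bookkeeping of the direction conventions — which occurrences carry $\hU_t$ versus $\hU_{-t}$, the identification $\hU^*=\hU_{-t}$ forced by Cor.~\ref{Cor_UnitaryActionOnProjs}, and verifying that the two context-conjugations ($\hU_{-t}W\hU_t=V$ both in \eq{Def_U.m_rho} and inside the corollary) genuinely cancel to a context-free statement. A slicker but less self-contained alternative would be to deduce the result straight from Prop.~\ref{Prop_CompatibilityToposForm}: substituting $\ps S_0\mapsto\ps S_{-t}$ there replaces $\ps S_t=\ptphi_{\hU_t}(\ps S_0)$ by $\ptphi_{\hU_t}(\ps S_{-t})=\ptphi_{\hU_t}(\ptphi_{\hU_{-t}}(\ps S_0))=\ps S_0$ (using $\ptphi_{\hU_{-t}}^{-1}=\ptphi_{\hU_t}$, i.e. $\ptphi_{\hU_t}\circ\ptphi_{\hU_{-t}}=\Id_{\Subcl\SigN}$); relabelling $V\mapsto\hU_t V\hU_{-t}$ and simplifying $\hU_{-t}(\hU_t V\hU_{-t})\hU_t=V$ then gives \eq{Eq_CovarianceToposForm} at once. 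I would present the direct computation as the main argument, as it mirrors the preceding proposition almost line for line.
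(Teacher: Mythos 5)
Your direct computation is correct and follows essentially the same chain of equalities as the paper's proof: both start from the right-hand side at the stage $\hU_t V\hU_{-t}$, unfold the state--proposition pairing and the unitary action on the global section (your route via Lemma \ref{Lem_StatePreshSchroedinger} and \eq{Def_U.m_rho} is interchangeable with the paper's direct use of $\rho_t=\rho_0\circ\phi_{\hU_{-t}}$ and \eq{Eq_ComponentsOfm_rho}), and both invoke Cor.~\ref{Cor_UnitaryActionOnProjs} together with $\ptphi_{\hU_t}\circ\ptphi_{\hU_{-t}}=\Id$ to collapse everything to $(m_{\rho_0}(\ps S_0))_V$. Your alternative derivation by substituting $\ps S_0\mapsto\ps S_{-t}$ and $V\mapsto\hU_tV\hU_{-t}$ in Prop.~\ref{Prop_CompatibilityToposForm} is also valid and slicker, though the paper does not take that shortcut.
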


\begin{proof}
We have, for all $V\in\VN$ and all $t\in\bbR$,
\begin{align}
			(m_{\rho_t}(\ps S_{-t}))_{\hU_t V\hU_{-t}} &\stackrel{\text{Def. \ref{Def_StatePropPairing}}}{=} m_{\rho_t;\hU_t V\hU_{-t}}(\ps S_{-t;\hU_t V\hU_{-t}})\\
			&\stackrel{\rho_t=\rho_0\circ\phi_{\hU_{-t}}}{=} m_{\rho_0\circ\phi_{\hU_{-t}};\hU_t V\hU_{-t}}(\ps S_{-t;\hU_t V\hU_{-t}})\\
			&\stackrel{\eq{Eq_ComponentsOfm_rho}}{=} (\rho_0\circ\phi_{\hU_{-t}})|_{\hU_t V\hU_{-t}}\circ\alpha_{\hU_t V\hU_{-t}}^{-1}(\ps S_{-t;\hU_t V\hU_{-t}})\\
			&= (\rho_0\circ\phi_{\hU_{-t}})|_{\hU_t V\hU_{-t}}(\hP_{\ps S_{-t;\hU_t V\hU_{-t}}})\\
			&= \rho_0(\hU_{-t}\hP_{\ps S_{-t;\hU_t V\hU_{-t}}}\hU_t)\\
			&\stackrel{\text{Cor. \ref{Cor_UnitaryActionOnProjs}}}{=} \rho_0(\hP_{\ptphi_{\hU_t}(\ps S_{-t})_V})\\
			&= \rho_0\circ\alpha_V^{-1}(\ptphi_{\hU_t}(\ps S_{-t})_V)\\
			&\stackrel{\eq{Eq_S_-t}}{=} \rho_0\circ\alpha_V^{-1}(\ps S_0;V)\\
			&\stackrel{\eq{Eq_ComponentsOfm_rho}}{=} m_{\rho_0}(\ps S_0)_V.
\end{align}
\end{proof}
Equation \eq{Eq_CovarianceToposForm} is the direct analogue of the covariance relation \eq{Eq_CovarianceStandardQT} of standard quantum theory.

\vspace{0.7cm}

\textbf{Acknowledgements.} Discussions with Chris Isham, Boris Zilber and Yuri Manin are gratefully acknowledged. Tom Woodhouse worked out a number of aspects in his M.Sc. thesis under my supervision. Rui Soares Barbosa pointed out some important issues, and Nadish de Silva, Dan Marsden and Carmen Constantin gave valuable feedback, for which I thank them. I also thank Bertfried Fauser, who carefully read a draft and made a number of very helpful remarks. I am grateful to John Harding, Izumi Ojima, John Maitland Wright and Dirk Pattinson for their interest in this work.

\end{document}